\newtheorem{theorem}{Theorem}[section]
\newtheorem{proposition}[theorem]{Proposition}
\newtheorem{question}[theorem]{Question}
\newtheorem{lemma}[theorem]{Lemma}
\newtheorem{corollary}[theorem]{Corollary}
\theoremstyle{remark}
\theoremstyle{definition}
\def\k{{\bf k}}
\def\P{{\bf P}}
\def\O{{\mathcal O}}
\def\Hilb{{\rm Hilb}}
\def\ker{\mathop{\rm ker}\nolimits}
\def\Hom{\mathop{\rm Hom}}
\def\image{\mathop{\rm image}}
\begin{document}

\title{Non-uniruledness results for spaces of rational curves 
in hypersurfaces}
\author{Roya Beheshti}
\date{}
\maketitle

\begin{abstract}
We prove that the sweeping components of the 
space of smooth rational curves in a smooth hypersurface of degree $d$ in $\P^{n}$ are not 
uniruled if $(n+1)/2 \leq d \leq n-3$. We also show that for any $e \geq 1$, 
the space of smooth rational curves of degree $e$ in a general hypersurface 
of degree $d$ in $\P^n$ is not uniruled roughly when $d \geq e \sqrt{n}$.
\end{abstract}

\section{Introduction}

Throughout this paper, we work over an algebraically closed field of 
characteristic zero $\k$. Let $X$ be a smooth hypersurface of degree $d$ in $\P^n$, and 
let $\Hilb_{et+1}(X)$ be the Hilbert scheme parametrizing  subschemes of 
$X$ with Hilbert polynomial $et+1$. Denote by $R_e(X)$ 
the closure of the open subscheme of $\Hilb_{et+1}(X)$ parametrizing smooth 
rational curves of degree $e$.  Following \cite{kodaira}, we call an irreducible component 
$R$ of 
$R_e(X)$ a {\it sweeping component} if the curves parametrized by 
its points sweep out $X$ or equivalently, 
if for a general point $[C]$ in $R$, $N_{C/X}$ is globally generated. 

In this paper, we consider the birational geometry of the sweeping components of 
$R_e(X)$, specifically, we are interested in the following question: for which values of $n, d$, and $e$, does $R_{e}(X)$ have non-uniruled sweeping components? 
A projective variety $Y$ of dimension $m$ is called uniruled if there is a variety $Z$ of dimension 
$m-1$ and a dominant rational map $Z \times \P^1 \dashrightarrow Y$. 
Our original motivation for this study comes from the question of whether or not general Fano hypersurfaces of low index are unirational.

We prove the following:

\begin{theorem}\label{main}
Let $X$ be any smooth hypersurface of degree $d$ in $\P^n$, $(n+1)/2 \leq 
d \leq n-3$. Then 
the sweeping components of $R_e(X)$ are all non-uniruled. 
\end{theorem}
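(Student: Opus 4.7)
The plan is to derive a contradiction from uniruledness by showing that $K_R\cdot B>0$ for any rational curve $B\cong\P^1\subset R$ passing through a general point and lying in a hypothetical covering family. Since a uniruling of $R$ produces free rational curves along which $K_R$ has non-positive degree, such an inequality will force $R$ to be non-uniruled.

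\emph{Setup.} At a general $[C_0]\in R$ the sweeping hypothesis forces $N_{C_0/X}$ to be globally generated, so $H^1(C_0,N_{C_0/X})=0$, $R$ is smooth at $[C_0]$, and $T_R\cong\pi_*\mathcal{N}$ on the open locus $U\subset R$ of globally generated normal bundles, where $\pi:\mathcal{C}\to R$ is the universal curve and $\mathcal{N}$ is the relative normal sheaf of $\mathcal{C}\subset R\times X$. Supposing $R$ uniruled, I would choose $B\cong\P^1\subset U$ general in a covering family; then $\pi_B:\mathcal{C}_B\to B$ is a Hirzebruch surface $F_k$ with a canonical map $g:F_k\to X$, $R^1\pi_{B,*}(\mathcal{N}|_{\mathcal{C}_B})=0$, and $(T_R)|_B\cong\pi_{B,*}(\mathcal{N}|_{\mathcal{C}_B})$ by cohomology and base change.

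\emph{Core computation.} Set $h=g^*H$ and $c=c_1(T_{\pi_B})$. The normal sequence $0\to T_{\pi_B}\to g^*T_X\to \mathcal{N}|_{\mathcal{C}_B}\to 0$, together with the identities $c_1(T_X)=(n+1-d)H$, $c_2(T_X)=[\binom{n+1}{2}-(n+1)d+d^2]H^2$, and the key fact that $c^2=0$ on a Hirzebruch surface, make the $c$-coefficient terms cancel and produce the Grothendieck--Riemann--Roch formula
\[
\deg \pi_{B,*}(\mathcal{N}|_{\mathcal{C}_B})\;=\;\tfrac{1}{2}\bigl[(n+1-d^2)h^2+(n+1-d)\,h\cdot c\bigr].
\]
Writing $h=e\sigma+bF$ on $F_k$ with $\sigma$ the negative section, the fact that $g$ does not contract $\sigma$ gives $b\ge ek$ and yields the clean identity $h^2=e(h\cdot c)$. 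Substituting,
\[
K_R\cdot B\;=\;-\tfrac{h\cdot c}{2}\bigl[e(n+1-d^2)+(n+1-d)\bigr].
\]

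\emph{Conclusion and main obstacle.} The range $(n+1)/2\le d\le n-3$ forces $n\ge 7$ and then $d^2-n-1\ge(n+1)(n-3)/4>0$; a routine check shows $e(d^2-n-1)>n+1-d$ for every $e\ge 1$, so $K_R\cdot B>0$, contradicting uniruledness. I expect the main technical obstacle to be arranging $B\subset U$ cleanly, i.e., ensuring that a general member of the hypothetical covering family stays inside the locus where normal bundles are globally generated, so that cohomology and base change apply without correction terms from $R^1\pi_{B,*}$. This is typically handled either by a careful compactification argument or by running the analogous computation in the mapping space $\mathrm{Mor}_e(\P^1,X)$ and descending; the degenerate possibility $\dim g(F_k)=1$ is straightforwardly excluded since it would force every fiber $C_b$ to equal the same subscheme of $X$, contradicting non-constancy of $B$ in the Hilbert scheme.
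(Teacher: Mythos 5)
Your Grothendieck--Riemann--Roch computation is correct \emph{in the idealized situation you set up}, and the final numerical check is fine, but the argument has a genuine gap exactly at the point you flag as a ``technical obstacle'': you never justify that a rational curve $B$ from a covering family can be taken to lie entirely inside the locus $U$ where $R$ is smooth, every fiber $C_b$ is a smooth rational curve with $H^1(C_b,N_{C_b/X})=0$, and $T_R|_B\cong\pi_{B,*}(\mathcal N|_{\mathcal C_B})$. Uniruledness only gives a free curve on a desingularization $\widetilde R$ through a general point; that curve can meet divisorial loci of $R$ where the parametrized curve degenerates or has non--globally generated normal bundle, and there $\mathcal C_B\to B$ is no longer a Hirzebruch surface, $R^1\pi_{B,*}$ acquires torsion, and the identification of $-K_R\cdot B$ with $\deg\pi_{B,*}\mathcal N$ fails (one only computes $\chi(\mathcal N\otimes I_C)$ up to correction terms of uncontrolled sign). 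This is not a routine compactification issue: it is the crux. Indeed, the paper runs essentially your computation in Section 5 (your $\deg\pi_{B,*}\mathcal N$ is its $\chi(N\otimes I_C)$, with the same coefficient $\tfrac{n+1-d^2}{2}H^2$), and the author states explicitly that one \emph{cannot} arrange $H^0(\P^1,R^1\pi_*(N\otimes I_C))=0$ for a minimal pair $(S,f)$; this is precisely why Section 5 only yields the much weaker Theorem \ref{lowdeg} after twisting by $f^*\O_X(m)$.

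Two further warning signs. First, your numerical conclusion $e(d^2-n-1)>n+1-d$ would establish non-uniruledness already when $d^2+d\geq 2n+3$ (for $e=1$) and for much smaller $d$ when $e$ is large --- i.e., essentially the open expectation stated in the introduction, known only for $e=1$. An argument for Theorem \ref{main} that incidentally resolves that conjecture should be treated with suspicion. Second, your proof never uses $d\leq n-3$ except to make the range of $(n,d)$ nonempty, whereas the paper's actual proof uses it essentially. That proof takes a different route entirely: Proposition \ref{free} extracts from the uniruling a rational surface $f:S\to X$ fibered over $\P^1$ with $\pi_*N_f$ globally generated (a statement about the \emph{general} fiber only, hence immune to the bad-fiber problem), and Proposition \ref{mainprop} then uses $(n-3)$-rd exterior powers of the normal sheaf sequence together with Serre duality to force $H^0(S,f^*\O_X(2d-n-1))=0$, i.e.\ $d<(n+1)/2$. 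To salvage your approach you would need either to bound the length of $R^1\pi_{B,*}(\mathcal N\otimes I_C)$ or to replace the degree computation on $B$ by one that sees only the generic fiber, which is in effect what the paper does.
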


Note that if $ d\leq n-1$, or if $d=n$ and $e\geq2$, $R_e(X)$ has at least one sweeping component. Also note that when $d \leq (n+4)/2$, $(d,n) \neq (3,3)$ and $X$ is general, $R_e(X)$ 
is irreducible (see \cite{hrs} and \cite{ij}), and it is conjectured that the same holds for general Fano hypersurface of dimension at least three \cite{ij}.

The proof of Theorem \ref{main} shows that when $d=n-2$ a sweeping component $R$ 
of $R_e(X)$ is non-uniruled if the normal bundle of a general 
curve parametrized by  $R$ 
is  balanced
(see Proposition  \ref{remark}). 
In the special case when $n=5$ and $d=3$, $R_e(X)$ is irreducible but it is not balanced. 
Modifying the 
proof of Theorem \ref{main}, we give a new proof of the following theorem:

\begin{theorem}[de~Jong-Starr \cite{ds}]\label{cubic}
If $X$ is a general cubic fourfold, then $R_e(X)$ is not 
uniruled when $e > 5$ is an odd integer, and the general fibers of 
the MRC fibration of any desingularization of $R_{e}(X)$ are at most 1-dimensional 
when $e > 4$ is an even integer.\\
\end{theorem}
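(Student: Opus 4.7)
The plan is to adapt the proof of Theorem \ref{main} at its boundary $d = n-2$, where non-uniruledness was reduced (see Proposition \ref{remark}) to balance of the generic normal bundle. On a cubic fourfold one has $n=5$, $d=3$, so $N_{C/X}$ has rank $3$ and degree $3e-2$; the balanced splitting type would be $\mathcal{O}(e-1)^{\oplus 2} \oplus \mathcal{O}(e)$, but this is not attained in general. The first step is to pin down the actual generic splitting on the (irreducible) space $R_e(X)$, and I expect the parity of $e$ to govern the deviation from balance, with odd $e$ giving a mild one-unit imbalance and even $e$ producing a larger two-unit imbalance that accounts for the non-constant MRC fiber.

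Second, I would revisit the step in the proof of Theorem \ref{main} (for $d=n-2$) where balance is used; presumably it enters as a positivity statement for a determinantal or symmetric construction on $N_{C/X}$, used to produce a nonzero section of a suitable power of $K$ on a desingularization of the sweeping component. In the odd case, where the splitting is at most one unit off balanced, this construction should carry through after adjusting a single exponent, and the bound $e>5$ will be precisely what keeps the relevant degree counts non-negative, giving non-uniruledness.

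For even $e$, the two-unit imbalance introduces a base locus for this canonical section, coming from a distinguished positive summand of $N_{C/X}$; first-order deformations in this summand should integrate to a genuine $1$-parameter family of smooth rational curves through a general $[C]$, which is exactly the promised MRC fiber. To bound the MRC fiber dimension by $1$, I would quotient $N_{C/X}$ by this summand and rerun the odd-case positivity argument on the rank-$2$ quotient, producing a top form on the MRC base under the assumption $e>4$. The principal obstacle is the normal-bundle analysis: Proposition \ref{remark} covers only balanced $N_{C/X}$, so a finer computation (for instance via degeneration of $C$ to a reducible or nodal curve whose normal bundle splits componentwise) is needed to establish the parity dichotomy. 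A secondary obstacle in the even case is to check that the distinguished first-order deformations actually integrate, which reduces to the vanishing of a specific class in $H^1$.
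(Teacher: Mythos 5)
Your odd-case outline is close in spirit to the paper but is aimed at the wrong bundle, and this matters. The balance condition that feeds into Proposition~\ref{remark} concerns $N_{C/\P^5}$ (rank $4$, degree $6e-2$), not $N_{C/X}$, and by Proposition~\ref{normal} the splitting for odd $e$ is $\O_C(\frac{3e-1}{2})^{\oplus 4}$ --- perfectly balanced in the paper's sense ($a_i+a_j<3e$), so Proposition~\ref{remark} applies verbatim with no ``adjusted exponent.'' Your guess about how balance is used is also off: it is not a positivity statement feeding a pluricanonical or determinantal construction on the moduli space (that is closer to the de~Jong--Starr route the paper deliberately avoids). It enters only as the vanishing $H^1(C,\bigwedge^{n-3}N'\otimes f^*\O_X(-d)|_C)=0$ inside the surjectivity argument of Proposition~\ref{mainprop}, carried out on the auxiliary ruled surface $S\to\P^1$ produced by Proposition~\ref{free} from the assumed uniruling, with the contradiction being $H^0(S,f^*\O_X(2d-n-1))=0$ forcing $d<(n+1)/2$.

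The genuine gap is the even case. Your plan --- exhibit a distinguished positive summand of the normal bundle, integrate its first-order deformations to produce the $1$-dimensional MRC fiber, then quotient and rerun the odd-case argument --- proves (at best) a lower bound on the MRC fiber, whereas the theorem asserts an upper bound; integrating the deformations and ``producing a top form on the MRC base'' are exactly the steps you cannot carry out without new input, and there is no canonical global summand to quotient by over the family. The paper instead argues by contradiction: assuming the MRC fibers are at least $2$-dimensional, Proposition~\ref{stronger} upgrades Proposition~\ref{free} to give, besides surjectivity of $H^0(S,Q)\to H^0(C,N|_C)$ for the relative normal sheaf $Q=N_{f,\pi}$, a nonzero image in $H^0(C,N\otimes I_C|_C)$ coming from $H^0(S,Q\otimes I_C)$. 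Each such section $r$ yields a functional $\beta_r$ on $H^1(C,\bigwedge^2Q\otimes f^*\O_X(-3)|_C)$ whose kernel contains both $V$ and $\image(\gamma)$ (Lemmas~\ref{lem-i} and~\ref{lem-r}), and Lemma~\ref{dif} shows two independent sections give distinct kernels, forcing $\image(\gamma)$ into codimension $\ge 2$; this contradicts the computation $H^1(C,\bigwedge^2 N_{C/\P^5}\otimes f^*\O_X(-3)|_C)=\k$, which bounds the codimension of $\image(\gamma)$ by $1$. That one-dimensionality of the obstruction space --- the precise quantitative failure of balance for even $e$ --- is the engine of the even case, and nothing in your proposal substitutes for it.
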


The questions which remain are first, what happens when $d=n-1$, or $d=n$ and $e \geq 2$? 
Second, how small can $d$ be for $R_e(X)$ to be non-uniruled?
When $d=n$, the uniruledness of the sweeping subvarieties of $R_e(X)$ has been 
studied in \cite{roya-jason}. 
It is shown that if $d=n$ and $e \leq n$, a subvariety of $R_e(X)$ is 
non-uniruled if the curves parametrized by its points sweep out $X$ or a divisor in $X$. 

When  $d^2 \leq n$, $X$ is rationally simply connected (see \cite{s4} and 
\cite{ds1}),  and in particular $R_e(X)$ is uniruled. There are evidences which suggest that 
when $ d^2+d \geq 2n+2$,  $R_e(X)$ is non-uniruled for 
general $X$, but this is known to be true only for $e=1$. The following is one such evidence:

\begin{theorem}[J.~Starr \cite{kodaira}]\label{jason}
Let $X \subset \P^n$ be a general hypersurface of degree $d$, and let $\overline{\mathcal M}_{0,0}(X,e)$ be the
Kontsevich moduli space parametrizing rational curves of degree $e$ on $X$. 
If $d < min (n-6, \frac{n+1}{2})$ and $d^2+d \geq 2n+2$, then for every $e >0$ 
the canonical divisor of $\overline{\mathcal M}_{0,0}(X,e)$ is big.
\end{theorem}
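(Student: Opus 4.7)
The plan is to apply adjunction to the inclusion $\overline{\mathcal M}_{0,0}(X,e) \subset \overline{\mathcal M}_{0,0}(\P^n,e)$. Letting $\pi\colon \mathcal C \to \overline{\mathcal M}_{0,0}(\P^n,e)$ denote the universal curve and $f\colon \mathcal C \to \P^n$ the universal map, $\overline{\mathcal M}_{0,0}(X,e)$ is cut out as the zero locus of the tautological section of the rank $ed+1$ bundle $\mathcal E_d := \pi_\ast f^\ast \O_{\P^n}(d)$. The numerical hypothesis $d < \min(n-6, (n+1)/2)$ ensures, for a general $X$, that this zero locus has the expected dimension, is irreducible, and is generically smooth, so adjunction yields
\[ K_{\overline{\mathcal M}_{0,0}(X,e)} \;=\; \bigl( K_{\overline{\mathcal M}_{0,0}(\P^n,e)} + c_1(\mathcal E_d) \bigr)\Big|_{\overline{\mathcal M}_{0,0}(X,e)}. \]

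The next step is to expand both summands in the standard basis of $\mathrm{Pic}(\overline{\mathcal M}_{0,0}(\P^n,e)) \otimes \mathbb Q$, consisting of the divisor $H_e$ of stable maps whose image meets a fixed codimension-two linear subspace, together with the boundary divisors $\Delta_i$, $1 \leq i \leq \lfloor e/2 \rfloor$. Pandharipande's formula supplies $K_{\overline{\mathcal M}_{0,0}(\P^n,e)}$ explicitly in this basis, while a Grothendieck--Riemann--Roch computation applied to $\pi$ and $f^\ast\O_{\P^n}(d)$ expresses $c_1(\mathcal E_d)$ in the same basis, with contributions split between a multiple of $H_e$ and boundary corrections coming from the nodal fibers of the universal curve. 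Adding the two expressions, the coefficient of $H_e$ is a rational function of $n,d,e$ whose sign is controlled precisely by $d^2+d-(2n+2)$; the hypothesis $d^2+d \geq 2n+2$ makes this coefficient non-negative, and a direct bookkeeping check shows the boundary coefficients are non-negative as well.

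The main obstacle is the final step: promoting non-negativity of the coefficients to bigness of the full class on $\overline{\mathcal M}_{0,0}(X,e)$. The divisor $H_e$ is semi-ample on the ambient Kontsevich space, since it descends via the cycle map to an ample class on the corresponding Chow variety, so $H_e|_{\overline{\mathcal M}_{0,0}(X,e)}$ is nef and becomes big once one checks that this cycle map is generically finite on each component of $\overline{\mathcal M}_{0,0}(X,e)$. Verifying this is a deformation-theoretic point: a general stable map parametrized by such a component must be shown to be birational onto its image, which uses the bound $d < n-6$ to rule out multiple covers and to control the normal bundle. Combining generic finiteness with non-negativity of all the coefficients then gives bigness of $K_{\overline{\mathcal M}_{0,0}(X,e)}$. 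The most error-prone part of the argument will be the Grothendieck--Riemann--Roch bookkeeping, specifically ensuring that the boundary corrections in $c_1(\mathcal E_d)$ do not cancel against Pandharipande's boundary coefficients in low-degree cases; checking that the estimate is uniform in $e$ is where the precise form of the two formulas must interact.
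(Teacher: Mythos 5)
First, a structural point: this paper does not prove Theorem \ref{jason} at all --- it is quoted from Starr \cite{kodaira} as background motivation --- so there is no in-paper argument to measure your proposal against. Judged against Starr's original proof, your outline identifies the correct strategy: realize $\overline{\mathcal M}_{0,0}(X,e)$ as the zero locus of the tautological section of $\mathcal E_d=\pi_*f^*\O_{\P^n}(d)$ (a vector bundle of rank $de+1$ since $R^1\pi_*$ vanishes in genus zero) inside $\overline{\mathcal M}_{0,0}(\P^n,e)$, apply adjunction, expand $K_{\overline{\mathcal M}_{0,0}(\P^n,e)}$ via Pandharipande's formula and $c_1(\mathcal E_d)$ via Grothendieck--Riemann--Roch in the basis $\{\mathcal H_e,\Delta_i\}$, and deduce bigness by writing the canonical class as a positive multiple of $\mathcal H_e$ (nef, and big once the cycle map is generically finite) plus an effective boundary combination. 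That is essentially the route of \cite{kodaira}.

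However, as written this is a plan rather than a proof, and the deferred steps are precisely where the content lies. You never exhibit the coefficient of $\mathcal H_e$, only assert that its sign is ``controlled precisely by $d^2+d-(2n+2)$''; and the non-negativity of the boundary coefficients, uniformly in $e$, is exactly the delicate bookkeeping you flag as error-prone but do not perform --- this is where the hypothesis $d<n-6$ is actually consumed in Starr's argument. Your attribution of that hypothesis to ``ruling out multiple covers'' for generic finiteness of the cycle map is off: generic injectivity already follows from the Harris--Roth--Starr result \cite{hrs} that for general $X$ with $d<(n+1)/2$ the space is irreducible of the expected dimension with generic point a smooth embedded rational curve (the same result that justifies your adjunction step). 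Until the two Chern-class computations are carried out and the sign analysis of every coefficient is verified, the claim that no cancellation occurs cannot be accepted; the proposal is an accurate roadmap to Starr's proof with a genuine gap at its computational core.
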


One cannot directly  conclude from the theorem above that the coarse moduli space 
$\overline{M}_{0,0}(X,e)$ is also of general type when $d$ satisfies the 
inequalities given in the statement of the theorem.
In \cite{kodaira}, it is conjectured that when $d+e \leq n$ and $d$ is in the range given in Theorem \ref
{jason}, $\overline{M}_{0,0}(X,e)$ has at worst canonical singularities, and it is shown that assuming 
this conjecture, the above theorem implies $\overline{M}_{0,0}(X,e)$ and hence $R_e(X)$ 
are of general type when $d+e \leq n$.   

 In Section 4, we show: 

\begin{theorem}\label{lowdeg}
Let $X \subset \P^n$ ($n \geq 12$) be a general hypersurface of degree $d$, and let $m \geq 1$ be an integer. If a general smooth rational curve $C$ in $X$ is $m$-normal (i.e. 
the global sections of $\O_{\P^n}(m)$ maps surjectively to those of $\O_{\P^n}(m)|_C$),   
and if  
$$
d^2 + (2m+1)d \geq (m+1)(m+2)n+2,
$$ 
then $R_e(X)$ is not uniruled. 
\end{theorem}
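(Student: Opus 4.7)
The plan is to apply the Miyaoka--Mori characterization of uniruledness: it suffices to show that $K_{R_e(X)} \cdot B \geq 0$ for every rational curve $B$ through a general point of $R_e(X)$. Accordingly, I would fix a general point $[C] \in R_e(X)$ corresponding to a smooth $m$-normal rational curve $C \subset X$ of degree $e$, assume for contradiction that some rational curve $B \cong \P^1 \subset R_e(X)$ passes through $[C]$, and derive $K_{R_e(X)} \cdot B \geq 0$ from the hypothesis $d^2 + (2m+1)d \geq (m+1)(m+2)n + 2$.

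Pulling back the universal family over $B$ yields a ruled surface $\pi \colon \Sigma \to B$ equipped with an evaluation morphism $\phi \colon \Sigma \to X$, whose general fibers map to smooth degree-$e$ rational curves in $X$; after minimal resolution, $\Sigma$ is a Hirzebruch surface. Generality of $[C]$ together with $d \leq n$ gives $H^1(C, N_{C/X}) = 0$, so $R^1 \pi_* N_{\Sigma/X} = 0$ and the tangent inclusion $T_B \hookrightarrow \pi_* N_{\Sigma/X}|_B$ combined with Grothendieck--Riemann--Roch on $\pi$ gives
\[
K_{R_e(X)} \cdot B \;\geq\; -\chi(\Sigma, N_{\Sigma/X}) + \mathrm{rank}\,\pi_* N_{\Sigma/X}.
\]
Using the exact sequence $0 \to T_{\Sigma/B} \to \phi^* T_X \to N_{\Sigma/X} \to 0$, together with the Euler sequence and the conormal sequence for $X \subset \P^n$, the right-hand side becomes an explicit polynomial expression in the intersection numbers $L^2$, $K_\Sigma \cdot L$, and $K_\Sigma^2$ on $\Sigma$, where $L := \phi^* \O_{\P^n}(1)$.

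The $m$-normality hypothesis is then used via the surjection $H^0(X, \O_X(m)) \otimes \O_B \twoheadrightarrow \pi_* \phi^* \O_X(m)$ (fiberwise-generic surjectivity propagated to $B$), which forces $\deg \pi_* \phi^* \O_X(m) \leq 0$. Computing this degree by Grothendieck--Riemann--Roch converts it into an upper bound on $L^2$ in terms of $K_\Sigma \cdot L$ and the rank $me+1$, with $h^0(X, \O_X(m)) = \binom{n+m}{m} - \binom{n+m-d}{m-d}$ coming from the Koszul complex of $X \subset \P^n$. Substituting this bound into the formula for $K_{R_e(X)} \cdot B$ produces a quadratic inequality in $d$; matching coefficients should show that the hypothesis $d^2 + (2m+1)d \geq (m+1)(m+2)n + 2$ is precisely what makes this expression non-negative.

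The main obstacle will be obtaining the $m$-normality bound in sufficiently sharp form. Converting the fiberwise statement into global control over $\pi_*\phi^*\O_X(m)$ on $B$ requires handling possible degenerations at special fibers of $\pi$, and the coefficients $(m+1)(m+2)$ and $(2m+1)$ must emerge cleanly from comparing Grothendieck--Riemann--Roch outputs on $\Sigma$ with dimensions of restricted linear systems on $C$. A secondary concern is that Miyaoka--Mori requires $K_{R_e(X)}$ to be at least $\mathbb{Q}$-Cartier near $[C]$; the generality of $X$ together with $n \geq 12$ should place $[C]$ at a smooth point of $R_e(X)$ in the expected dimension, resolving this.
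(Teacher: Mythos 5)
There is a genuine gap, in fact two. First, the reduction to Miyaoka--Mori is not available here: $R_e(X)$ is in general singular, its canonical divisor is not known to be $\mathbb{Q}$-Cartier or to have controlled discrepancies on a resolution, and a rational curve $B$ through a general point may well meet the singular locus, so the inequality $K_{R_e(X)}\cdot B\geq 0$ is neither well-defined nor computable from $\pi_*N_{\Sigma/X}$. This is precisely the difficulty the paper flags after Theorem \ref{jason}: one cannot pass from canonical-class statements on the moduli stack to the coarse space without a canonical-singularities conjecture. The paper's actual mechanism (Proposition \ref{free}) sidesteps the canonical divisor entirely: uniruledness plus the Kodaira--Spencer map for a sweeping (hence generically smooth) component produce a surface $\pi:S\to\P^1$ with $f:S\to X$ such that $\pi_*N_f$ is globally generated, i.e.\ $H^0(S,N_f)\to H^0(C,N_f|_C)$ is surjective; no smoothness of $R_e(X)$ beyond the generic point is needed.

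Second, the way you deploy $m$-normality does not work. A generically surjective map $H^0(X,\O_X(m))\otimes\O_B\to\pi_*\phi^*\O_X(m)$ forces $\deg\pi_*\phi^*\O_X(m)\geq 0$, not $\leq 0$, and even with the sign corrected the resulting bound $K_\Sigma\cdot L\leq mL^2-2e$ is an upper bound on $K_\Sigma\cdot L$, whereas your estimate for $-\chi(\Sigma,N_{\Sigma/X})$ needs a lower bound on $K_\Sigma\cdot L$ (the term $\tfrac{n+1-d}{2}L\cdot K_\Sigma$ must be bounded below). In the paper, $m$-normality is used for something else entirely: together with surjectivity of $H^0(S,N)\to H^0(C,N|_C)$ it gives surjectivity of $H^0(S,N\otimes f^*\O_X(m))\to H^0(C,N\otimes f^*\O_X(m)|_C)$, hence $H^1(\P^1,\pi_*(N\otimes f^*\O_X(m)\otimes I_C))=0$; combined with $H^0(\P^1,R^1\pi_*(N\otimes f^*\O_X(m)\otimes I_C))=0$ --- which requires Voisin-type global generation of $T_X(1)$ on \emph{every} rational curve in a general $X$ (Proposition \ref{global}) and a careful treatment of reducible fibers (Proposition \ref{h1}, Lemma \ref{intersection}); note your claim that $\Sigma$ is a Hirzebruch surface is unjustified --- one gets $\chi(N\otimes f^*\O_X(m)\otimes I_C)\geq 0$. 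The contradiction then comes from a Riemann--Roch computation showing this $\chi$ is negative under $d^2+(2m+1)d\geq(m+1)(m+2)n+2$, where the $K_S^2$ term is controlled by Reider's theorem applied to $2H+2C+K_S$ for a minimal pair $(S,f)$. Since your proposal never twists the normal sheaf by $f^*\O_X(m)$, the coefficients $(m+1)(m+2)$ and $(2m+1)$ --- which arise exactly from $\chi(N\otimes f^*\O_X(m)\otimes I_C)$ --- cannot emerge from your setup.
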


Since 
every smooth rational curve of degree $e \geq 3$ in $\P^n$ is $(e-2)$-normal, we get:

\begin{corollary}\label{rem-lowdeg}
Let $X$ be a general hypersurface of degree $d$ in $\P^n$, $n \geq 12$. If $e\geq 3$ is an integer, and if     
$$d^2 + (2e-3)d \geq e(e-1)n+2,$$ 
then $R_e(X)$ is not uniruled. 
\end{corollary}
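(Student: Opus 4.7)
The plan is to deduce Corollary \ref{rem-lowdeg} directly from Theorem \ref{lowdeg} by the specific choice $m = e-2$. First I would verify the $m$-normality hypothesis for $m = e-2$, namely that every smooth rational curve $C$ of degree $e \geq 3$ in $\P^n$ is $(e-2)$-normal, so in particular a general such curve lying on $X$ is. Since $C$ is a smooth rational curve of degree $e \geq 3$, it must span a linear subspace $\P^k \subset \P^n$ with $3 \leq k \leq e$: it cannot lie in a $\P^2$ because a smooth plane curve of degree $e \geq 3$ has positive genus. Applying the Gruson--Lazarsfeld--Peskine regularity bound to the non-degenerate curve $C \subset \P^k$ shows that the ideal sheaf $\mathcal{I}_{C/\P^k}$ is $(e-k+2)$-regular, and hence in particular $(e-1)$-regular. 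This forces $H^1(\P^k, \mathcal{I}_{C/\P^k}(m)) = 0$ for all $m \geq e-2$, and the long exact sequence of $0 \to \mathcal{I}_{C/\P^k}(m) \to \O_{\P^k}(m) \to \O_C(m) \to 0$ then yields surjectivity of the restriction $H^0(\P^k, \O_{\P^k}(m)) \twoheadrightarrow H^0(C, \O_C(m))$. Composing with the obvious surjection $H^0(\P^n, \O(m)) \twoheadrightarrow H^0(\P^k, \O_{\P^k}(m))$ gives the required $(e-2)$-normality in $\P^n$.

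Next, a purely numerical check finishes the argument: plugging $m = e-2$ into the hypothesis of Theorem \ref{lowdeg} and using the identities $2m+1 = 2e-3$ and $(m+1)(m+2) = e(e-1)$ transforms $d^2 + (2m+1)d \geq (m+1)(m+2)n + 2$ into exactly $d^2 + (2e-3)d \geq e(e-1)n + 2$, which is the assumption of the corollary. Theorem \ref{lowdeg} then yields that $R_e(X)$ is not uniruled.

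Because the substantive geometric and birational content is entirely encapsulated in Theorem \ref{lowdeg}, the deduction poses no serious obstacle; the only mildly nontrivial input is the $(e-2)$-normality of smooth rational curves of degree $e$ in $\P^n$, which is a classical consequence of Castelnuovo--Mumford regularity as indicated above.
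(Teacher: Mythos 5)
Your proposal is correct and follows exactly the paper's route: the corollary is obtained by setting $m=e-2$ in Theorem \ref{lowdeg}, after noting that every smooth rational curve of degree $e\geq 3$ in $\P^n$ is $(e-2)$-normal. Your justification of that normality statement via the Gruson--Lazarsfeld--Peskine regularity bound (using that such a curve spans a $\P^k$ with $3\leq k\leq e$) supplies a detail the paper merely asserts, and your numerical substitution $2m+1=2e-3$, $(m+1)(m+2)=e(e-1)$ matches the stated inequality.
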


As it will be explained in the last section, we expect that better upper bounds exist on the regularity of general smooth rational curves contained in a general smooth hypersurface of degree $d$ in $\P^n$, so the bound in Corollary \ref{rem-lowdeg} could be possibly improved. 

\subsection*{Acknowledgments} 
The author would like to thank Izzet Coskun, Mohan Kumar, Mike Roth, and Jason Starr for many helpful conversations.

\section{A Consequence of Uniruledness}

In this section, we prove a proposition, analogous to the existence of free rational curves on 
non-singular uniruled varieties, for varieties whose spaces of smooth rational curves are 
uniruled. 

For a morphism $f: Y \to X$ between smooth varieties, by the {\it normal sheaf of $f$}  we 
will mean the cokernel of the induced map on the tangent bundles $T_Y \to f^*T_X$. 

If $Y$ is an irreducible projective variety, and if $\widetilde{Y}$ is a desingularization of $Y$, then the 
maximal rationally connected (MRC) fibration of $\widetilde{Y}$  is a smooth morphism 
$\pi: Y^0 \to Z$ from an open subset $Y^0 \subset \widetilde{Y}$ 
such that the fibers of $\pi$ are all rationally connected, and such that 
for a very general point $z \in Z$, any rational curve in $\widetilde{Y}$ 
intersecting $\pi^{-1}(z)$ is contained in 
$\pi^{-1}(z)$. The MRC fibration of any smooth variety exists and is unique up to birational 
equivalences \cite{kollar}. 

Let $Y$ be an irreducible projective variety, and assume the fiber of the MRC fibration of $\widetilde{Y}$ at a general point is 
$m$-dimensional. Then it follows from the definition that there is an irreducible component 
$Z$ of $\Hom(\P^1, Y)$ such that 
the map $\mu_1: Z \times \P^1 \to Y$ defined by $\mu_1([g], b) = g(b)$ is dominant  and the image of 
the map 
$\mu_2: Z \times \P^1 \times \P^1 \to Y \times Y
$ 
defined by 
$\mu_2([g],b_1,b_2) = 
(g(b_1),g(b_2))
$ 
has dimension $\geq \dim Y +m$.

\begin{proposition}\label{free}
Let $X \subset \P^n$ be a nonsingular projective variety. If an irreducible sweeping 
component $R$ of 
$R_e(X)$ is uniruled, then there 
exist a smooth rational surface $S$ with a dominant morphism $\pi: S \to \P^{1}$
and a generically finite 
morphism $f: S \to X$ with the following two properties:

\begin{enumerate}
\item[(i)] If $C$ is a general fiber of $\pi$, then $f|_C$ 
is a closed immersion onto a smooth curve parametrized 
by a general point of $R$. 
\item[(ii)] If $N_f$ denotes the normal sheaf of $f$, then $\pi_{*}N_f$ is 
globally generated. 
\end{enumerate}

\noindent Moreover, if the fiber of the MRC 
fibration of a desingularization of $R$ at a general point is 
at least $m$-dimensional, then there are such $S$ and $f$ with the additional property that 
$\pi_{*}N_f$ has an ample subsheaf of rank $ = m-1$. 
\end{proposition}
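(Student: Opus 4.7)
The plan is to imitate the classical construction of free rational curves on uniruled varieties, transplanting it from $R$ to $X$ through the universal family: a free rational curve $g : \P^1 \to R$ is pulled back to produce $S$, and a short exact sequence of the form $0 \to T_{\P^1} \to g^* T_R \to \pi_* N_f \to 0$ transfers positivity of $g^* T_R$ to positivity of $\pi_* N_f$.

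First I produce the free curve. Let $\sigma : \widetilde R \to R$ be a desingularization. Uniruledness, together with the MRC fiber dimension bound $\geq m$ in the moreover case, furnishes (as recalled just above the proposition) a component $Z \subset \Hom(\P^1, \widetilde R)$ with $\mu_1$ dominant and $\mu_2$-image of dimension $\geq \dim R + m$; for a general $\tilde g \in Z$, the pullback $\tilde g^* T_{\widetilde R}$ is then globally generated and, by the $\mu_2$ condition, has an ample subbundle of rank $m$. By further genericity $\tilde g$ avoids the exceptional locus of $\sigma$ and lands in the open dense locus $R^\circ \subset R$ parametrizing embedded smooth rational curves with globally generated normal bundle (nonempty because $R$ is sweeping, and contained in the smooth locus of $R$ since $H^1(C, N_{C/X}) = 0$ there). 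Setting $g := \sigma \circ \tilde g$ yields $g^* T_R \cong \tilde g^* T_{\widetilde R}$ with the same positivity.

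Next let $S := \P^1 \times_R \mathcal C \subset \P^1 \times X$ be the pullback of the universal family $\mathcal C$, with projections $\pi : S \to \P^1$ and $f : S \to X$. Because $g(\P^1) \subset R^\circ$, $S$ is a smooth rational ruled surface with every fiber of $\pi$ embedding in $X$ as a general member of $R$, so (i) holds; also $f$ is generically finite onto a two-dimensional image since $g$ is non-constant. For (ii), the splitting $T_{\P^1 \times X}|_S \cong \pi^* T_{\P^1} \oplus f^* T_X$ produces a surjection $N_{S/\P^1 \times X} \twoheadrightarrow N_f$ by projecting to $f^* T_X$, with exact sequence
\begin{equation*}
0 \to \pi^* T_{\P^1} \to N_{S/\P^1 \times X} \to N_f \to 0
\end{equation*}
(the kernel being $\pi^* T_{\P^1}$ since $df : T_S \to f^* T_X$ is injective by generic finiteness of $f$ and torsion-freeness of $T_S$). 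On each fiber $C_t$, $N_{S/\P^1 \times X}|_{C_t} \cong N_{C_t/X}$ has vanishing $H^1$, so cohomology and base change together with the identification $T_{[C]} R \cong H^0(C, N_{C/X})$ yield $\pi_* N_{S/\P^1 \times X} \cong g^* T_R$. Since $R^1 \pi_* \pi^* T_{\P^1} = 0$, pushing down gives
\begin{equation*}
0 \to T_{\P^1} \to g^* T_R \to \pi_* N_f \to 0,
\end{equation*}
the first map being $dg$; hence $\pi_* N_f$ is globally generated as a quotient of the globally generated $g^* T_R$. For the moreover statement, let $A \subset g^* T_R$ be a saturated rank-$m$ ample subbundle; either $A \cap T_{\P^1} = 0$, so $A$ injects into $\pi_* N_f$ as an ample subsheaf of rank $m$, or $T_{\P^1} \subseteq A$, in which case $A/T_{\P^1}$ is an ample rank-$(m-1)$ subbundle of $\pi_* N_f$ (ampleness passing to vector bundle quotients).

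The main obstacle is the first step: arranging $\tilde g$ to be simultaneously free, off the exceptional locus of $\sigma$, and inside the smooth unobstructed part of $R$ where base change behaves well. These are each open non-empty conditions on $Z$ (using the sweeping hypothesis for the third), and their common intersection is non-empty by a dimension count; this uniformity is what allows the identification $\pi_* N_{S/\P^1 \times X} \cong g^* T_R$ to hold on all of $\P^1$ rather than only on a dense open subset, and keeps $S$ smooth without recourse to an auxiliary resolution that would introduce exceptional curves.
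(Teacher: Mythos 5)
Your overall architecture differs from the paper's: you pull back the universal family along a single free rational curve $g$ in a resolution $\widetilde{R}\to R$ and read off everything from the exact sequence $0 \to T_{\P^1} \to g^*T_R \to \pi_*N_f \to 0$, whereas the paper pulls back the family over all of $Z\times\P^1$, desingularizes that total space, takes the fiber over a general $z\in Z$ (smooth by generic smoothness), and obtains (ii) from the surjectivity of $H^0(S,N_f)\to H^0(C,N_f|_C)$ for a single general fiber $C$ via the differential of $\mu$ (a Kodaira--Spencer diagram). Your route would be cleaner if it worked, but it has a genuine gap at exactly the step you flag as "the main obstacle." You need $\tilde g(\P^1)$ to avoid the exceptional locus of $\sigma$ and $g(\P^1)$ to lie entirely inside the locus $R^\circ$ of smooth points of $R$ parametrizing embedded smooth curves with globally generated normal bundle. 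These are indeed open conditions on $Z$, but openness does not give non-emptiness, and no dimension count does either: the complement of $R^\circ$ and the image of the exceptional locus may well contain divisors of $R$ (e.g.\ the locus of nodal members of the family is typically a divisor), and a member of a dominating family of rational curves meets every effective divisor on which its class has positive degree. A general free curve avoids closed subsets of codimension $\geq 2$, not divisors. So in general $g(\P^1)\not\subset R^\circ$.

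Once $g(\P^1)$ meets the bad locus, every ingredient of your sequence breaks at the corresponding fibers: $S=\P^1\times_R\mathcal{C}$ need not be smooth, $T_R$ need not be locally free along $g(\P^1)$ (so $g^*T_R\cong\tilde g^*T_{\widetilde R}$ fails), $N_{S/\P^1\times X}|_{C_t}$ need not equal $N_{C_t/X}$, and cohomology-and-base-change for $\pi_*N_{S/\P^1\times X}$ fails where $h^1(C_t,N_{C_t/X})$ jumps. Since your identification $\pi_*N_{S/\P^1\times X}\cong g^*T_R$ must hold on all of $\P^1$ to conclude that $\pi_*N_f$ is a quotient of a globally generated bundle, the argument does not go through. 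The paper's construction is designed precisely to sidestep this: it never asserts anything about the special fibers of $\pi$ (property (i) concerns only the general fiber), and it deduces global generation of $\pi_*N_f$ on $\P^1$ from surjectivity of the restriction map to one general fiber, which follows from surjectivity of $d\mu$ and of $H^0(C,N_{f(C)/X})\to H^0(C,N_f|_C)$; the auxiliary resolution of the total space that you wanted to avoid is harmless there because no exact sequence of push-forwards over all of $\P^1$ is ever needed. The remaining parts of your argument (the push-out to $N_f$, the quotient argument for global generation, and the case analysis $A\cap T_{\P^1}=0$ versus $T_{\P^1}\subseteq A$ for the "moreover" clause) are fine in themselves, but they all sit downstream of the unjustified avoidance claim.
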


\begin{proof}
Let $U \subset R \times X$ be the universal family over 
$R$. Since $R$ is uniruled, there exist a quasi-projective 
variety $Z$ and a dominant morphism $\mu: Z \times \P^1 \to R$. 
Let $V \subset Z \times \P^1 \times X$ be the pullback of the 
universal family to $Z \times \P^{1}$, and denote by $q: V \to Z \times X$ and $p: V \to Z$ the projection maps.  

Consider a desingularization $g: \widetilde{V} \to V$, and let $\widetilde{q} = q \circ g$ and 
$\widetilde{p} = p \circ g$. Denote the fibers of $p$ and $\widetilde{p}$ over $z$ by $S$ and $\tilde{S}$ 
respectively. Let $f: S \to X$ be the restriction of $q$ to 
$S$, and let $\widetilde{f} = f \circ g: \widetilde{S} \to X$. 
Since $z$ is general, by generic smoothness, $\widetilde{S}$ is a smooth surface whose 
general fiber over $\P^1$ is a smooth connected rational curve. 
We claim that $\widetilde{S}$ and $\widetilde{f}$ satisfy the properties 
of the theorem. The first property is clearly satisfied.

To show the second property is satisfied, we consider the Kodaira-Spencer map associated to 
$\widetilde{V}$ at a general point $z \in Z$. Denote by 
$N_{\widetilde{q}}$ the normal sheaf of the map ${\widetilde{q}}$.
We get a sequence of maps 
$$
T_{Z,z} \to H^0(\widetilde{S}, \widetilde{p}^*T_Z|_{\tilde{S}}) 
\to H^0(\widetilde{S}, \widetilde{q}^*T_{X \times Z}|_{\widetilde{S}}) 
\to H^0(\widetilde{S}, N_{\widetilde{q}}|_{\widetilde{S}}).
$$
Let $b$ be a general point of $\P^1$. 
Composing the above map with the projection map $T_{Z \times \P^1, (z,b)} \to 
T_{Z,z}$, we get a map 
$
T_{Z\times \P^1, (z,b)} \to H^{0}(\widetilde{S}, N_{\widetilde{q}}|_{\widetilde{S}}).
$
Note that if $N_{\widetilde{f}}$ denotes the normal sheaf of $\widetilde{f}$, then 
$N_{\widetilde{q}}|_{\widetilde{S}}$ is naturally isomorphic to $N_{\widetilde{f}}$. 
Also, if $C$ is the fiber of $\pi: \widetilde{S} \to \P^1$ over $b$, then we have a short exact sequence 
$$
0 \to N_{C/\widetilde{S}}  \to N_{\widetilde{f}(C)/X} \to N_{\widetilde{f}}|_C \to 0.
$$
So we get a commutative diagram
$$\xymatrix{
T_{Z \times \P^{1}, (z, b)} \ar[d]^{d\mu_{(z,b)}} \ar[r] & T_{Z,z} \ar[r] & H^{0}(\widetilde{S}, 
N_{\widetilde{f}}) \ar[d] \\
T_{R, [\widetilde{f}(C)]}= H^0(\widetilde{f}(C), N_{\widetilde{f}(C)/X}) \ar[rr] &&H^0(C, N_{\widetilde{f}}|_C)
}
$$ 
Since $\mu$ is dominant, and since $R$ is sweeping and therefore generically smooth, 
$d\mu_{(z,b)}$ is surjective. Since the bottom row is also surjective, 
the map $H^{0}(\widetilde{S}, N_{\widetilde{f}}) 
\to H^{0}(C, N_{\widetilde{f}}|_{C})$ is surjective as well. Thus 
$\widetilde{\pi}_{*} N_{\widetilde{f}}$ is globally generated.

Suppose now that $R$ is uniruled and that the general fibers of the MRC fibration of $R$ 
are at least $m$-dimensional. Let $\dim R = r$. Then there exists a morphism 
$\mu_{1}: Z \times \P^{1} \to R$ such that the 
image of
$$
\mu_{2} : Z \times \P^{1} \times \P^{1} \to R \times R
$$
$$ 
\;\; \mu_{2} (z, b_{1}, b_{2}) = (\mu_{1}(z, b_{1}), \mu_{1}(z, b_{2}))
$$
has dimension $ \geq r + m$.
If $\widetilde{S}$ and $\widetilde{f}$ are as before,  and if 
$C_1$ and $C_2$ denote the fibers of $\pi$ over general points $b_1$ and $b_2$ of $\P^1$, then the image of the map 
$$d\mu_2 : T_{Z \times \P^{1} \times \P^{1}, (z, b_{1}, b_{2})} 
\to T_{R \times R, ([\widetilde{f}(C_1)], [\widetilde{f}(C_2)])} = H^0(C_1, 
N_{\widetilde{f}(C_1)/X}) \oplus 
H^0(C_2, N_{\widetilde{f}(C_2)/X}) $$ 
is at least ($r 
+ m$)-dimensional. The desired result now follows from the following commutative diagram 
$$
\xymatrix{ 
T_{Z \times \P^1 \times \P^1, (z, b_1, b_2)} \ar[d]^{(d\mu_2)_{(z,b_1,b_2)}} 
\ar[r] & T_{Z, z} \ar[r] & H^0(\widetilde{S}, N_{\widetilde{f}}) \ar[d] \\
 T_{R \times R, ([\widetilde{f}(C_1)], [\widetilde{f}(C_2)])}
\ar[rr] && H^0(C_1, N_{\widetilde{f}}|_{C_1}) 
\oplus H^0(C_2, N_{\widetilde{f}}|_{C_2}),
}
$$
and the observation that the kernel of the bottom row is 2-dimensional. 
\end{proof}

The above proposition will be enough for the proof of Theorem \ref{main},  but to prove Theorem \ref{cubic} in the even case, we will need a slightly stronger variant. Let $f: Y \to X$ be a morphism between smooth varieties, and let $N_f$ be the normal sheaf of $f$ 
$$
0 \to T_Y \to f^*T_X \to N_f \to 0.
$$
Suppose there is a dominant map $\pi: Y \to \P^1$, and let $M$ be the image of the map 
induced by $\pi$ on the tangent bundles $T_Y \to \pi^*T_{\P^1}$. 
Consider the push-out of the above sequence by the map $T_Y \to M$

 $$
\xymatrix{
0 \ar[r] &T_Y \ar[r] \ar[d] & f^*T_X \ar[r] \ar[d] & N_f \ar[r] \ar[d]^{=} & 0\\
0 \ar[r] & M \ar[r] \ar[d] & N_{f,\pi} \ar[r] \ar[d] & N_f \ar[r] & 0\\
& 0 & 0
 }
$$
The sheaf $N_{f,\pi}$ in the above diagram will be referred to as the {\it normal sheaf of  $f$ relative 
to $\pi$}. 

Property (ii) of Proposition \ref{free} says that $H^0(S, N_f) \to H^0(C, N_f|_C)$ is surjective. 
An argument parallel to the proof of Proposition \ref{free} shows the following: 

\begin{proposition}\label{stronger}
Let $X$ be as in Proposition \ref{free}. Then property (ii) can be strengthen as follows:

\begin{enumerate}
\item[(ii')] If $N_f$ denotes the normal sheaf of $f$, and if $N_{f,\pi}$ denotes the normal sheaf of $f$ 
relative to $\pi$, then the composition of the maps 
$$
H^0(S, N_{f,\pi}) \to H^0(C, N_{f,\pi}|_C) \to H^0(C, N_f|_C) 
$$ 
is surjective for a general fiber $C$ of $\pi$. 
\end{enumerate}

\noindent Moreover, if the general fibers of the MRC fibration of a desingularization of $R$ 
are at least $m$-dimensional, then there are $S$ and $f$ with properties (i) and (ii') such that the image of the map 
$$
H^0(S, N_{f,\pi}\otimes I_C) \to H^0(C, (N_f\otimes I_C)|_C)
$$
is at least ($m-1$)-dimensional.
\end{proposition}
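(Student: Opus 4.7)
The plan is to mirror the proof of Proposition \ref{free} with two adjustments: pass from $N_{\widetilde{f}}$ to the relative sheaf $N_{\widetilde{f},\pi}$ using a cohomological vanishing, and localize the two-point dimension count from the end of that proof to a single fiber via a limit argument. I reuse the setup $\widetilde{V}, \widetilde{S}, \pi, \widetilde{f}, C$ from that proof without change, and write $r = \dim R$.

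For (ii'), the key input is the vanishing
$$
H^1(\widetilde{S}, M) = H^1(\P^1, T_{\P^1} \otimes \pi_*\O_{\widetilde{S}}) = H^1(\P^1, \O_{\P^1}(2)) = 0,
$$
which uses $\pi_*\O_{\widetilde{S}} = \O_{\P^1}$ from Stein factorization. The long exact sequence of $0 \to M \to N_{\widetilde{f},\pi} \to N_{\widetilde{f}} \to 0$ then yields a surjection $H^0(\widetilde{S}, N_{\widetilde{f},\pi}) \twoheadrightarrow H^0(\widetilde{S}, N_{\widetilde{f}})$; composing with (ii) of Proposition \ref{free} gives (ii').

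For the moreover, I would run the two-point evaluation. Starting from $\mu_1: Z \times \P^1 \to R$ such that $\mu_2$ has image of dimension $\geq r + m$, and picking general $b_1, b_2 \in \P^1$ with fibers $C_1, C_2$, the same commutative diagram as in the proof of Proposition \ref{free} shows that $H^0(\widetilde{S}, N_{\widetilde{f}})$ maps to $H^0(C_1, N_{\widetilde{f}}|_{C_1}) \oplus H^0(C_2, N_{\widetilde{f}}|_{C_2})$ with image of dimension at least $r + m - 2$. Each summand has dimension $r - 1$, since the exact sequence $0 \to N_{C_i/\widetilde{S}} \to N_{\widetilde{f}(C_i)/X} \to N_{\widetilde{f}}|_{C_i} \to 0$ with $N_{C_i/\widetilde{S}} \cong \O_{C_i}$ (together with the global generation of $N_{\widetilde{f}(C_i)/X}$) forces $h^0(N_{\widetilde{f}}|_{C_i}) = r - 1$. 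Hence the sections vanishing on $C_1$ hit an $(m-1)$-dimensional subspace of $H^0(C_2, N_{\widetilde{f}}|_{C_2})$. Letting $b_2 \to b_1$ converts this $(m-1)$-dimensional image into the image of sections of $N_{\widetilde{f}} \otimes I_{C_1}$ under the normal-derivative map to $H^0(C_1, N_{\widetilde{f}}|_{C_1}) = H^0(C_1, (N_{\widetilde{f}} \otimes I_{C_1})|_{C_1})$. Lifting along $H^0(\widetilde{S}, N_{\widetilde{f},\pi} \otimes I_{C_1}) \twoheadrightarrow H^0(\widetilde{S}, N_{\widetilde{f}} \otimes I_{C_1})$, afforded by $H^1(\widetilde{S}, M \otimes I_{C_1}) = H^1(\P^1, \O_{\P^1}(1)) = 0$, gives the conclusion.

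The hardest step will be the $b_2 \to b_1$ specialization: translating the two-point bound into a first-jet statement along $\pi$, i.e., showing that the image of $H^0(\widetilde{S}, N_{\widetilde{f}}) = H^0(\P^1, \pi_*N_{\widetilde{f}})$ in the two-point restriction deforms into its image in the first jet bundle $J^1(\pi_*N_{\widetilde{f}})$ at $b_1$. Once this limit is justified, the dimension arithmetic transfers directly from Proposition \ref{free}.
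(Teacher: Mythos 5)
Your reduction of (ii$'$) to (ii) hinges on the vanishing $H^1(S,M)=0$, and this is where the argument breaks. You compute $H^1(S,M)$ as $H^1(\P^1,T_{\P^1}\otimes\pi_*\O_{S})=H^1(\P^1,\O_{\P^1}(2))=0$, but that computation is valid for $\pi^*T_{\P^1}$, not for $M$: by definition $M$ is only the \emph{image} of $d\pi\colon T_{S}\to\pi^*T_{\P^1}$, i.e.\ $M=\pi^*T_{\P^1}\otimes J_Z$ where $J_Z$ is the critical (Jacobian) ideal of $\pi$, and these sheaves differ as soon as $\pi$ has singular fibers, which one cannot exclude in this construction. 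From $0\to M\to\pi^*T_{\P^1}\to \pi^*T_{\P^1}\otimes\O_Z\to 0$ and $H^1(S,\pi^*T_{\P^1})=0$ one gets that $H^1(S,M)$ is the cokernel of $H^0(\P^1,\O_{\P^1}(2))\to H^0(\pi^*T_{\P^1}\otimes\O_Z)$; since sections of $\pi^*\O_{\P^1}(2)$ are pullbacks from $\P^1$, this cokernel is nonzero whenever $Z$ meets four or more distinct fibers (e.g.\ $\pi$ has four nodal fibers). The same objection defeats the twisted vanishing $H^1(S,M\otimes I_C)=0$, so the lift from $H^0(S,N_f\otimes I_C)$ to $H^0(S,N_{f,\pi}\otimes I_C)$ in your "moreover" step is not available either. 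The argument the paper intends (it gives none explicitly, saying only that it is parallel to Proposition \ref{free}) sidesteps this entirely: one reruns the Kodaira--Spencer diagram with $N_{f,\pi}$ in place of $N_f$, using that for a general, hence smooth, fiber $C$ one has $N_{f,\pi}|_C\cong N_{f(C)/X}$, so that $H^0(C,N_{f,\pi}|_C)$ \emph{is} $T_{R,[f(C)]}$ and surjectivity of $d\mu$ produces the needed classes directly as global sections of $N_{f,\pi}$; no lifting of arbitrary sections of $N_f$ (hence no $H^1(M)$ vanishing) is required, and the single dimension lost in passing from $N_{f,\pi}|_C$ to $N_f|_C$ is what accounts for the $m-1$.

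Your second step, the $b_2\to b_1$ specialization, is the right idea but is not a routine limit: the rank of an evaluation map can only drop under specialization, so "letting $b_2\to b_1$" does not by itself yield a lower bound at $b_1$. What rescues it is the structure of sheaves on $\P^1$: writing the torsion-free part of $\pi_*N_{f,\pi}$ as $\oplus_i\O_{\P^1}(a_i)$, the two-point estimate forces at least $m$ of the $a_i$ to be $\geq 1$ (at least $m-1$ in your $N_f$ version), and for each such summand $H^0(\O_{\P^1}(a_i-1))$ generates the fiber at \emph{every} point of $\P^1$, in particular at $b_1$. With that lemma supplied, your dimension count for the "moreover" part does go through --- but only if it is run for $N_{f,\pi}$ itself rather than deduced from the $N_f$ statement, for the reason above.
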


\section{Proof of Theorem \ref{main}}
Throughout this section, $X$ will be a smooth hypersurface of degree $d \leq n-3$ in $\P^n$. 
Assume that a sweeping component $R$ of 
$R_e(X)$ is uniruled. Then we apply Proposition \ref{free} to show that $d <(n+1)/2$.  

\begin{proposition}\label{mainprop}
Let $S$ and $f$ be as in 
Proposition \ref{free}. If $C$ is a general fiber of 
$\pi: S \to \P^1$ and $I_C$ is the ideal sheaf of $C$ in $S$,  then the restriction map 
$$ H^{0}(S, f^{*}\O_{X}(2d-n-1) \otimes I_C^\vee) 
\to H^{1}(C, f^{*}\O_{X}(2d-n-1) \otimes I_C^\vee|_{C})$$
 is zero. 
 \end{proposition}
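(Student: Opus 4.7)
The plan is to establish the vanishing by combining the surjectivity $H^{0}(S, N_f) \twoheadrightarrow H^{0}(C, N_f|_C)$ from Proposition~\ref{free}(ii) with the algebraic coincidence $\O_X(2d-n-1) \cong K_X \otimes \O_X(d)$: the twist being analyzed is precisely the one that arises when the rank-$(n-3)$ normal sheaf $N_f$ is paired with its determinant via the hypersurface structure of $X$.

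First I would unwind the target. Since $C$ is a smooth fiber of $\pi \colon S \to \P^1$, the normal bundle $N_{C/S}$ is trivial, $I_C \cong \O_S(-C)$, and $(f^{*}\O_X(2d-n-1) \otimes I_C^\vee)|_C$ is naturally identified with $\O_{\P^1}(e(2d-n-1))$. When $d \geq (n+1)/2$ this line bundle has nonnegative degree, so the target $H^1$ vanishes and the proposition is automatic; the real content of the statement therefore lies in the range $d \leq (n-1)/2$, where $H^1(C, \O_{\P^1}(e(2d-n-1)))$ is genuinely nonzero.

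Next I would identify the stated map with a Kodaira-Spencer composite. Using the two normal bundle sequences
\[
0 \to N_f \to N_{f,\P^n} \to f^*\O_X(d) \to 0, \qquad 0 \to T_S \to f^*T_X \to N_f \to 0,
\]
together with the determinant identity $\det N_f \cong f^*\O_X(n+1-d) \otimes K_S$ and the Plücker-type isomorphism $\wedge^{n-4}N_f \cong N_f^\vee \otimes \det N_f$, I would assemble a commutative diagram in which the restriction map of the proposition factors through the restriction $H^{0}(S, N_f) \to H^{0}(C, N_f|_C)$. In this diagram, the defining equation of $X \subset \P^n$ (a section of $\O_{\P^n}(d)$) supplies the extra twist by $\O_X(d)$ needed to convert $K_X$ into $\O_X(2d-n-1)$, and Serre duality on the fiber $C \cong \P^1$ is used to re-express $H^{1}(C, \O_{\P^1}(e(2d-n-1)))$ in terms of sections of a line bundle built from $\det N_{C/X}$.

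The proof would then conclude by invoking Proposition~\ref{free}(ii): once the map is displayed as a composite of deformation-theoretic maps, the surjectivity of $H^{0}(S, N_f) \to H^{0}(C, N_f|_C)$ forces the composite to land in a group that vanishes by a direct degree count on $\P^1$. The main obstacle, which I expect to be the principal source of friction, is the precise construction of the commutative diagram in the second step: there is no long exact sequence producing a natural map $H^{0}(S, \cdot) \to H^{1}(C, \cdot)$, so the map must be extracted from the Koszul complex attached to the normal bundle sequence, and the bookkeeping of twists---tracking the canonical class of $S$, the determinant of $N_f$, the hypersurface twist $\O_X(d)$, and the trivial normal bundle $N_{C/S}$---must be carried out carefully for the diagram to commute.
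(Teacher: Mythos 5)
Your toolbox is the right one---Serre duality, the two normal-sheaf sequences for $S\to X$ and $S\to\P^n$, the determinant identity $\det N_f\cong f^*\O_X(n+1-d)\otimes\omega_S$, and the surjectivity $H^0(S,N_f)\to H^0(C,N_f|_C)$ from Proposition~\ref{free}(ii)---but the proposal goes wrong at its very first step and then leaves the decisive constructions as acknowledged gaps. The target group should be read as $H^0(C, f^*\O_X(2d-n-1)\otimes I_C^\vee|_C)$: the map is the honest restriction map (the ``$H^1$'' in the display is a misprint), and its vanishing is exactly what is used immediately after the proposition to get $H^0(S,f^*\O_X(2d-n-1))=H^0(S,f^*\O_X(2d-n-1)\otimes I_C^\vee)$ and hence $d<(n+1)/2$. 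Taking the $H^1$ literally, you declare the proposition automatic for $d\ge(n+1)/2$ and locate its ``real content'' in $d\le(n-1)/2$---exactly backwards. With the correct target the statement is nontrivial precisely when $e(2d-n-1)\ge 0$, i.e.\ $d\ge(n+1)/2$, which is the range where Theorem~\ref{main} applies; your reading renders the proposition vacuous exactly where it is needed. Relatedly, your worry that ``there is no long exact sequence producing a natural map $H^0(S,\cdot)\to H^1(C,\cdot)$'' is chasing a phantom: no such map has to be manufactured from a Koszul complex. After Serre duality the correct claim is that $H^1(S,f^*\O_X(n+1-2d)\otimes\omega_S)\to H^1(C,f^*\O_X(n+1-2d)\otimes\omega_S|_C)$ is surjective, equivalently that the connecting map $H^0(C,f^*\O_X(2d-n-1)\otimes I_C^\vee|_C)\to H^1(S,f^*\O_X(2d-n-1))$ is injective.

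Beyond the misreading, two inputs that actually make the argument close are missing. First, the commutative diagram you defer is built from an explicit contraction $\bigwedge^{f-e-1}M\otimes\bigwedge^{e}E\otimes(\bigwedge^{f}F)^\vee\to M^\vee$ applied to both normal-sheaf sequences, and one must observe that the cokernel of the left vertical map restricted to $C$ is torsion ($N$ and $N'$ need not be locally free everywhere, since $f$ is only generically finite), so that surjectivity can be transported from the top row to the bottom. Second---and this is where the hypothesis $d\le n-3$ enters, on which your proposal is silent---one needs $H^1(C,\bigwedge^{n-3}N'\otimes f^*\O_X(-d)|_C)=0$, obtained from the Euler-sequence surjection $f^*\O_{\P^n}(1)^{\oplus n+1}\to N'$ together with $n-3-d\ge 0$; without it the long exact sequence of the top row does not yield surjectivity of $H^1(S,\bigwedge^{n-3}N\otimes f^*\O_X(-d))\to H^1(C,\bigwedge^{n-3}N\otimes f^*\O_X(-d)|_C)$. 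One also needs global generation of $N|_C$ (from $R$ being sweeping) to upgrade surjectivity of $H^0(S,N)\to H^0(C,N|_C)$ to surjectivity on $\bigwedge^{n-4}N$. As written, the proposal identifies the right ingredients but does not contain a proof.
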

    
If the above map 
is zero, then 
$
H^0(S, f^*\O_X(2d-n-1))  = H^0(S, f^*\O_X(2d-n-1) \otimes I_C^\vee).
$
Thus, 
\begin{equation*}
\begin{split}
H^0(\P^1, \pi_*f^*\O_X(2d-n-1)) & = H^0(\P^1, \pi_*(f^*\O_X(2d-n-1) \otimes I_C^\vee))\\
& = H^0(\P^1, (\pi_*f^*\O_X(2d-n-1)) \otimes \O_{\P^1}(1)),
\end{split}
\end{equation*}
which is only possible if $H^0(\P^1, \pi_*f^*\O_X(2d-n-1))=0$. So 
$H^0(S, f^*\O_X(2d-n-1))=0$ and $d < (n+1)/2$.

\begin{proof}[Proof of Proposition \ref{mainprop}]

Let $\omega_S$ be the canonical sheaf of $S$. 
By Serre duality, it suffices to show that if $S$ and $f$ satisfy the properties of Proposition 
\ref{free}, then the restriction map 
$$
H^1(S, f^*\O_X(n+1-2d) \otimes \omega_S) \to H^1(C, f^*\O_X(n+1-2d) \otimes \omega_S|_C)
$$
is surjective. Let $N$ be the normal sheaf of the map $f: S \to X$, and let 
$N^\prime$ be the normal sheaf 
of the map $S \to \P^n$. 
Since the normal bundle of $X$ in $\P^{n}$ is 
isomorphic to  
$\O_{X}(d)$, we get a short exact sequence 
\begin{equation}\label{eq1}
0 \to N \to N'  \to f^*\O_X(d) \to 0.
\end{equation}
Taking the $(n-3)$-rd exterior power of this sequence, we get the following short exact sequence
$$ 
0 \to \bigwedge^{n-3} N \otimes f^*\O_X(-d) \to \bigwedge^{n-3} N' \otimes f^*\O_X(-d) 
\to \bigwedge^{n-4} N \to 0.
$$

\medskip

For an exact sequence of sheaves of $\O_S$-modules  
$ 0 \to E \to
F \to M \to 0
$ with $E$ and $F$ locally free  of ranks $e$ and $f$, there is a natural map of sheaves
 $$ 
 \bigwedge^{f-e-1} M \otimes \bigwedge^e E \otimes (\bigwedge^f F)^\vee \to M^{\vee}
 $$
 which is  defined locally at a point $s \in S$ as follows: assume 
		$\gamma_1, \dots, \gamma_{f-e-1} \in M_{s}$,  
		$\alpha_{1}, \dots, \alpha_{e} \in E_{s}$, and $\phi: 
		\bigwedge^{f}F_{s} \to O_{S,s}$; then for $\gamma \in M_s$, we set 
		$ \gamma_{f-e} = \gamma$, and we define the map to be 
		$$ \gamma  
		\mapsto \phi(\tilde{\gamma}_{1} \wedge \tilde{\gamma}_{2} 
		\wedge \dots \wedge 
		\tilde{\gamma}_{f-e} \wedge \alpha_{1} 
		\wedge \dots \wedge \alpha_{e})$$
		where $\tilde{\gamma}_{i}$ is any lifting of 
		$\gamma_{i}$ in $F_s$.
		Clearly, this map does not depend on the choice of the liftings, and thus 
		it is defined globally. 
So from the short exact sequence $0 \to T_S \to f^*T_X \to N \to 0$, we get a map 
$$  
\bigwedge^{n-4}N \to N^\vee \otimes f^*\O_X(n+1-d) \otimes \omega_S,
$$
and from the short exact sequence $0 \to T_S \to f^*T_{\P^n} \to N' \to 0$, we get 
a map 
$$ 
\bigwedge^{n-3} N' \otimes f^*O_X(-d) \to (N')^\vee \otimes 
    f^*O_X(n+1-d) \otimes \omega_S.
$$

With the choices of the maps we have made, 
the following diagram, whose bottom row is obtained from dualizing sequence (\ref{eq1}) and tensoring with $f^*\O_X(n+1-2d) \otimes \omega_S$, is commutative with exact rows

{\small{
$$
\xymatrix @ C=0.8pc{
0 \ar[r] & \bigwedge^{n-3}N \otimes f^*\O_X(-d) \ar[r] \ar[d] & \bigwedge^{n-3} N' \otimes 
f^*O_X(-d) \ar[r] \ar[d] & \bigwedge^{n-4}N  \ar[d] \ar[r] & 0 \\
0 \ar[r] & f^*\O_X(n+1-2d) \otimes \omega_S \ar[r] & (N')^\vee \otimes f^*\O_X(n+1-d) 
\otimes \omega_S \ar[r] & N^\vee \otimes f^*\O_X(n+1-d) \otimes \omega_S  \ar[r] 
& 0 
}
$$
}}

\bigskip 

Since the cokernel of the first vertical map restricted to $C$ is a torsion sheaf, to show the assertion, 
it will suffice to show that the map 
$$
H^1(S, \bigwedge^{n-3}N \otimes f^*\O_X(-d)) \to 
H^1(C, \bigwedge^{n-3}N \otimes f^*\O_X(-d)|_C)
$$ 
is surjective. Applying the long exact sequence of 
cohomology to the top sequence, the surjectivity assertion follows if we show that 

\begin{enumerate}
\item[(1)]$H^0(S, \bigwedge^{n-4}N) \to H^0(C, \bigwedge^{n-4}N|_C)$ is surjective.
\item[(2)] $H^1(C, \bigwedge^{n-3}N' \otimes f^*\O_X(-d)|_C)=0$.
\end{enumerate}

\noindent Since $R$ is a sweeping component, $N_{f(C)/X}$ is globally 
generated, so $f^*T_X|_C$ and $N|_C$ are globally generated as well. Thus we can conclude (1) from the 
assumption that $\pi_*N$ is globally generated and hence $H^0(S, N) \to H^0(C, N|_C)$ is surjective. 

To show (2), note that there is a surjective map 
$f^*\O_{\P^n}(1)^{\oplus n+1} \to f^*T_{\P^n}$, so we get  a surjective map 
$
f^*\O_{\P^n}(1)^{\oplus n+1} \to N'.
$
Taking the 
$(n-3)$-rd exterior power, and then tensoring with $f^{*}\O_X(-d)$, 
we get a surjective map 
$$
 f^{*}\O_{\P^n}(n-3-d)^{\oplus {n+1 \choose{n-3}}} \to \bigwedge^{n-3}N' \otimes 
f^{*}\O_{X}(-d).
$$ 
Restricting to $C$, 
since $ n-3-d \geq 0$, we have $H^1(C, \bigwedge^{n-3}N' \otimes f^*\O_X(-d)|_C) = 0$.
\end{proof}

We conclude this section with a result on the case $d=n-2$. Let $C$ be a smooth rational 
curve of degree $e$ in $\P^n$ whose normal bundle 
$N_{C/{\P^n}}$ is globally generated. If we write 
$$
N_{C/{\P^n}} = \O_C(a_1) \oplus \dots \oplus \O_C(a_{n-1}), 
$$ 
then $\sum_{1 \leq i \leq n-1} a_i = e(n+1)-2$. 
We say $N_{C/\P^n}$ is {\em balanced} if $a_i + a_j < 3e$ for every $i \neq j$, and we call 
an irreducible component $R$ of $R_e(X)$ a {\em balanced component} if $N_{C/\P^n}$ is balanced for a 
general curve 
$C$ parametrized by $R$. 

If $C$ is balanced and $d=n-2$, then $H^1(C, \bigwedge^{n-3} N_{C/\P^n} \otimes \O_{\P^n}(-d)|_C) =0$. Thus 
if $N'$ is as in the proof of Theorem \ref{main}, we have  
$$
H^1(C, \bigwedge^{n-3} N' \otimes f^*\O_X(-d) |_C)=0.
$$ 
The proof of Theorem \ref{main} then shows that 

\begin{proposition}\label{remark}
Suppose that $X$ is a smooth hypersurface of degree $d=n-2$ in $\P^n$. 
Let $R$ be a sweeping  component of $R_e(X)$ and $C$ 
a curve parametrized by a general point of $R$.  If $N_{C/{\P^n}}$ is balanced, then $R$ is not uniruled. 
\end{proposition}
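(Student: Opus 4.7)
The plan is to argue by contradiction, running the proof of Proposition \ref{mainprop} verbatim and patching its single failure point at the boundary case $d = n-2$. Assuming $R$ is uniruled, Proposition \ref{free} supplies a smooth surface $S$, a dominant morphism $\pi: S \to \P^1$ with general fiber $C$, and a generically finite morphism $f: S \to X$ such that $f|_C$ is a closed immersion onto a general (hence balanced) curve of $R$ and such that $\pi_*N_f$ is globally generated. As in Proposition \ref{mainprop}, the target is to show that
$$
H^{0}(S, f^{*}\O_{X}(2d-n-1) \otimes I_C^\vee) \to H^{1}(C, f^{*}\O_{X}(2d-n-1) \otimes I_C^\vee|_{C})
$$
is the zero map. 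Once this is established, the argument immediately following Proposition \ref{mainprop} forces $H^0(S, f^*\O_X(n-5)) = 0$, contradicting generic finiteness of $f$: for $n \geq 5$ one has $n-5 \geq 0$, and global sections of $\O_X(n-5)$ pull back nontrivially to $S$.

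Unpacking the proof of Proposition \ref{mainprop}, this vanishing reduces, via Serre duality and the comparison diagram of exterior power sequences, to two ingredients: surjectivity of $H^0(S, \bigwedge^{n-4} N) \to H^0(C, \bigwedge^{n-4} N|_C)$, and the vanishing $H^1(C, \bigwedge^{n-3} N' \otimes f^*\O_X(-d)|_C) = 0$, where $N$ and $N'$ denote the normal sheaves of $f: S \to X$ and of $S \to \P^n$ respectively. The first is insensitive to the value of $d$ and carries over verbatim from the sweeping hypothesis and the global generation of $\pi_*N$. The second is exactly where the original argument invoked $n-3-d \geq 0$, a condition that just fails when $d = n-2$, so this is the step that must be replaced.

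The main new input, and the only substantive calculation, is this second vanishing. Since $f|_C$ is a closed immersion onto a general curve of the balanced component $R$, we identify $N'|_C \cong N_{f(C)/\P^n}$ and write it as $\bigoplus_{i=1}^{n-1} \O_C(a_i)$ with $\sum_i a_i = e(n+1)-2$ and $a_i + a_j < 3e$ for $i \neq j$ by the balancedness hypothesis. A summand of $\bigwedge^{n-3} N'|_C$ corresponds to deleting two indices $j,k$ and so has degree $e(n+1)-2-a_j-a_k > (n-2)e - 2 = de - 2$. Twisting by $f^*\O_X(-d)|_C$, which has degree $-de$ on $C$, each summand is a line bundle on $C \cong \P^1$ of degree strictly greater than $-2$, hence at least $-1$, and thus has vanishing $H^1$. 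This degree bound is the only point at which the balancedness assumption enters; the remainder of the argument is exactly the diagram chase and exterior power manipulation already performed in Proposition \ref{mainprop}, and I expect no further obstacle.
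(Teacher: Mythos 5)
Your argument is essentially identical to the paper's: the paper also runs the proof of Theorem \ref{main} verbatim and replaces only step (2) of Proposition \ref{mainprop}, using balancedness to show that every summand $\O_C(e(n+1)-2-a_j-a_k)$ of $\bigwedge^{n-3}N_{C/\P^n}\otimes\O_{\P^n}(-d)|_C$ has degree $>-2$, hence vanishing $H^1$. The one slip in your write-up is the identification $N'|_C\cong N_{f(C)/\P^n}$: these have ranks $n-2$ and $n-1$ respectively, so they are not isomorphic; rather $N'|_C$ is the quotient of $N_{C/\P^n}$ by the line bundle $N_{C/S}$. This is harmless: your degree computation is exactly the correct one for $\bigwedge^{n-3}N_{C/\P^n}$, and since $\bigwedge^{n-3}N'|_C$ is a quotient of $\bigwedge^{n-3}N_{C/\P^n}$ and $H^1$-vanishing passes to quotients on a curve, the required vanishing $H^1(C,\bigwedge^{n-3}N'\otimes f^*\O_X(-d)|_C)=0$ follows --- which is precisely how the paper phrases it.
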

 
It might be true that a general hypersurface of degree $n-2$ in 
 $\P^n$ has a balanced component when $n \geq 6$ and $e \geq 1$. 
 If $n=5$ and $d=3$, then the normal bundle of the curves parametrized by general points of 
 $R_e(X)$ are not balanced when $e \geq 6$ is an even integer (see Proposition \ref{normal}). 

\section{Cubic Fourfolds}
In this section, we prove Theorem \ref{cubic}, but before giving the proof, we 
first briefly explain the idea  of the proof given in \cite{ds}.

Let $X \subset \P^5$ be a general hypersurface of degree 3.
Then by [1, Proposition 2.4], $R_e(X)$ is irreducible, and if we denote by $\overline{\mathcal M}_{0,0}(X,e)$ the Kontsevich moduli space of stable maps of 
degree $e$ from curves of genus zero to $X$ and by $\overline{M}_{0,0}(X,e)$ the corresponding 
coarse moduli space, then $\overline{M}_{0,0}(X,e)$ is birational to $R_e(X)$. Let $\widetilde{M}$ 
a desingularization of $\overline{M}_{0,0}(X,e)$. 
\begin{theorem}[\cite{ds}, Theorem 1.2]\label{ds}
Let $X \subset \P^5$ be a general cubic hypersurface. 
There is a canonical section $\omega_e \in H^0(\widetilde{M}, 
\Omega^2_{\widetilde{M}})$ with 
the following property:

\noindent  (a) If $e$ is odd, $e \geq 5$, and if $p$ is a general point of 
$\widetilde{M}$, then $\omega_e$ induces a non-degenerate pairing on $T_{\widetilde{M},p}$. 

\noindent (b) If $e$ is even, $e \geq 6$,  and if $p$ is a general point of 
$\widetilde{M}$, then the linear map $T_{\widetilde{M}, p} 
\to T_{\widetilde{M},p}^\vee$ 
induced by $\omega_e$ has a 1-dimensional kernel.
\end{theorem}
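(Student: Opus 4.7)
The plan is to construct $\omega_e$ as a Hodge-theoretic pushforward and then compute the rank of the induced skew-symmetric form on the tangent space at a general smooth point of $\widetilde{M}$. The central fact underlying the construction is that a smooth cubic fourfold $X \subset \P^5$ has $H^{3,1}(X) = H^1(X, \Omega^3_X)$ one-dimensional, generated by an explicit Griffiths residue (the Jacobian ring piece in degree $2d-n-1=0$ is one-dimensional); fix a generator $\sigma$.

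For the construction, let $\pi \colon \mathcal{C} \to \widetilde{M}$ be the universal curve with evaluation $f \colon \mathcal{C} \to X$. I would set $\omega_e := \pi_*(f^*\sigma)$, realized as the image of $\sigma$ under
$$
H^1(X, \Omega^3_X) \xrightarrow{f^*} H^1(\mathcal{C}, f^*\Omega^3_X) \to H^1(\mathcal{C}, \Omega^3_{\mathcal{C}}) \xrightarrow{\pi_*} H^0(\widetilde{M}, \Omega^2_{\widetilde{M}}),
$$
where the last arrow is the Leray--Gysin pushforward along the relative $\P^1$-family $\pi$, extracted from the relative cotangent sequence. Extending the form across the boundary of $\widetilde{M}$ and desingularizing the universal curve are routine once we work on a desingularization of the Kontsevich space.

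At a general smooth $[C] \in \widetilde{M}$, the tangent space is $H^0(C, N_{C/X})$. Adjunction gives $\det N_{C/X} = \O_{\P^1}(3e-2)$, and sweeping implies $N_{C/X}$ is globally generated, so $h^0(C, N_{C/X}) = 3e+1$. I would express $\omega_e|_{[C]}$ concretely as the composition of the cup product $H^0(C, N_{C/X})^{\otimes 2} \to H^0(C, \bigwedge^2 N_{C/X})$ with a contraction against a section of $\bigwedge^2 N_{C/X}^\vee \otimes \omega_C$ coming from restricting $\sigma$ to an infinitesimal neighborhood of $C$ via the normal bundle sequence. Parity of $3e+1$ then already forces the corank bounds of (a) and (b): for $e$ odd the form can be non-degenerate, while for $e$ even it must have at least a 1-dimensional kernel.

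The essential work is to show these parity-forced bounds are achieved. I would first prove that for $e \geq 3$ and $C$ general, $N_{C/X} \cong \O(a_1) \oplus \O(a_2) \oplus \O(a_3)$ is maximally balanced (via standard deformation/degeneration arguments on $R_e(X)$), and then translate the Griffiths-residue pairing into an explicit bilinear form on $\bigoplus_i H^0(\P^1, \O(a_i))$. The non-degeneracy (odd case) and exact 1-dimensional kernel (even case) then become a combinatorial statement about pairings of monomial bases. The main obstacle is precisely this last step: controlling the explicit form requires tight analysis of how $\sigma$ interacts with the normal bundle sequence along $C$, and the even case additionally demands identifying the 1-dimensional kernel as a specific geometric tangent direction---plausibly the infinitesimal deformation preserving a parity invariant attached to even-degree rational curves on the cubic fourfold---to rule out any further degeneration. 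The thresholds $e \geq 5$ and $e \geq 6$ should emerge because for smaller $e$ the form either vanishes or picks up extra zeros beyond the one forced by dimension parity.
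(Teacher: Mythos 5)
This theorem is not actually proved in the paper: it is Theorem~1.2 of de~Jong--Starr \cite{ds}, quoted as a black box, and the surrounding text only sketches the strategy of \cite{ds} before the paper proves the weaker Theorem~\ref{cubic} by a completely different, local argument (Propositions~\ref{free}, \ref{stronger}, \ref{remark} and the wedge-power computation of Section~4). What you have written is a reconstruction of the \cite{ds} strategy, and up to the point where you stop it agrees with the paper's summary of that strategy: $H^1(X,\Omega^3_X)$ is one-dimensional for a cubic fourfold (via Griffiths residues), fiber integration along the universal curve sends its generator to a $2$-form $\omega_e$ on $\widetilde{M}$ (with the descent to a desingularization of the coarse space handled by the trace map of \cite{ds}, Proposition~3.6, which you dismiss as routine), and at a general point $\omega_e$ becomes a skew pairing on $T_{\widetilde{M},p}=H^0(C,N_{C/X})$, whose dimension $3e+1$ has the parity you indicate.

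The gap is that everything carrying actual content is deferred. The parity observation proves nothing in case (a) --- for odd $e$ you must \emph{show} the form is non-degenerate, not merely that no dimension count forbids it --- and in case (b) it gives only the trivial lower bound, whereas the theorem asserts the kernel is \emph{exactly} one-dimensional. You name the explicit computation of the pairing on $\bigoplus_i H^0(\O_C(a_i))$ as ``the essential work'' and ``the main obstacle'' and then do not carry it out; but that computation \emph{is} the theorem. Moreover the input you would feed into it is stated too casually: the splitting type of the normal bundle for general $C$ (Proposition~\ref{normal}, i.e.\ \cite{ds}, 7.1) is itself a substantial result, and it already distinguishes odd from even $e$ --- for even $e$ the bundle $N_{C/\P^5}$ is \emph{not} balanced in the paper's sense, since two summands have degrees summing to $3e$, which is precisely what produces the extra kernel direction. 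Your closing suggestion that this kernel is ``plausibly the infinitesimal deformation preserving a parity invariant'' is speculation rather than argument. As written, the proposal establishes neither (a) nor the upper bound in (b), and so does not prove the statement.
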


If $Y$ is a non-singular projective variety, and if $Y$ has a non-zero 2-form $\omega$ such that for a general point $p \in Y$, the kernel of the map 
$$ T_{Y,p} \to T_{Y,p}^\vee
$$
induced by the restriction of $\omega$ to $p$ has dimension at most $m$, then 
the fiber of the MRC fibration of $Y$ at a general point 
is at most $m$-dimensional \cite[Lemma 1.4]{ds}. So 
the above theorem implies Theorem \ref{cubic}. 

The proof of Theorem \ref{ds} is based on a general construction of differential forms on any  desingularization of $\overline{M}_{0,0}(X,e)$. Pulling back forms to the universal 
curve over $\overline{\mathcal M}_{0,0}(X,e)$, and then integrating along the fibers, one gets 
a linear  map 
$$
H^{i+1}(X, \Omega_X^{j+1}) \to H^{i}(\overline{\mathcal M}_{0,0}(X,e), \Omega^{j}_{\overline{\mathcal M}_{0,0}(X,e)}).
$$
When $i=0$, this map gives $j$-forms on the moduli stack. Invoking the existence of a 
trace map then gives $j$-forms on any desingularization of the coarse moduli 
space (\cite{ds}, Proposition 3.6).  For a cubic 
threefold, $H^1(X, \Omega_X^3)$ is 1-dimensional, so from the above construction, 
one gets a natural 2-form $\omega_e$ on 
$\widetilde{M}$. 

The next step is to compute the dimension of the kernel of  $\omega_e$ restricted to a general point $p \in \widetilde{M}$. Note that if $C \subset X$ is the rational curve 
parametrized by $p$, then $T_{\widetilde{M},p} = H^0(C, N_{C/X})$, so $\omega_{e,p}$ 
gives a map $\delta: \bigwedge^2H^0(C, N_{C/X}) \to \k$. 

To prove Theorem \ref{ds}, for a general smooth rational curve $C$ of degree $e$ on $X$, $N_{C/P^n}$ 
and $N_{C/X}$ are computed, and then, the corresponding pairing is described. 

The proof of Theorem \ref{cubic} given in this section has some similarities with the proof outlined above, 
but our method is local, and that enables 
us to avoid the technicalities and most of the computations involved in the proof presented in 
\cite{ds}. For our  purpose, it is enough to compute $N_{C/{\P^n}}$ for a general rational curve 
$C$ of degree $e$ on $X$. 

\begin{proposition}[\cite{ds}, 7.1] \label{normal}
Let $X$ be a general cubic fourfold, and let $C$ be a general smooth 
rational curve of degree $e\geq 5$ on $X$. Then 
$$N_{C/\P^5}=
\begin{cases}
\O_C(\frac{3e-1}{2})^{\oplus 4} \;\;\;\;\;\;\;\;\;\;\;\;\;\;\;\;\;\;\;\;\;\;\;\;\; \text{ if  }  e \text{ is odd},\\
\O_C(\frac{3e}{2})^{\oplus 2} \oplus \O_C(\frac{3e-1}{2})^{\oplus 2} \;\;\;\;\;  \text{ if } e  \text{ is even.}
\end{cases}
$$
\end{proposition}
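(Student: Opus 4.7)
The plan is to identify the asserted splittings as the maximally balanced ones and then reduce, by semicontinuity, to exhibiting a single smooth cubic fourfold with such a rational curve on it.

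From the Euler sequence on $\P^5$ restricted to $C \cong \P^1$ of degree $e$ one gets $\deg T_{\P^5}|_C = 6e$, hence $\deg N_{C/\P^5} = 6e - 2$. Up to reordering, the splittings listed in the statement are precisely those splittings of a rank-$4$, degree-$(6e-2)$ vector bundle on $\P^1$ in which any two summand degrees differ by at most $1$. Achieving this maximally balanced splitting is the vanishing of $h^1(N_{C/\P^5}(-k))$ for an appropriate twist $k$, hence an open condition in flat families. So it suffices to exhibit a single pair $(X_0, C_0)$, with $X_0$ a smooth cubic fourfold and $C_0 \subset X_0$ a smooth rational curve of degree $e$, of the asserted splitting type.

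For $e$ small enough that a general smooth rational curve of degree $e$ in $\P^5$ still lies on some cubic (i.e.\ roughly $3e+1 \leq 56$), I would take such a curve $C_0$, with maximally balanced normal bundle by Sacchiero's theorem, and then produce a smooth cubic containing it by a Bertini-type argument in the linear system $|I_{C_0}(3)|$, being careful to check transversality along the base locus $C_0$. For larger $e$, no general balanced curve lies on any cubic, and the example has to be built inductively on a fixed smooth cubic $X$: starting from a balanced rational curve of small degree, repeatedly attach a general line $L \subset X$ at a single point and smooth the resulting nodal rational curve to degree one higher. The smoothing is governed by the standard normal-bundle exact sequence at the node, with an elementary transformation correction, and relies on the fact that a general line on a smooth cubic fourfold has balanced normal bundle in $\P^5$.

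The main obstacle is precisely this inductive step: verifying that each smoothing preserves maximal balancedness of $N_{C/\P^5}$. It is especially delicate because, as the paper itself notes after Proposition \ref{remark}, the internal normal bundle $N_{C/X}$ is genuinely \emph{unbalanced} for even $e \geq 6$. So one cannot argue purely inside $X$, but must track both $N_{C/X}$ and the quotient $\O_C(3e)$ in the extension
$$
0 \to N_{C/X} \to N_{C/\P^5} \to \O_C(3e) \to 0
$$
through the smoothing, and verify that the limiting extension class is generic enough not to destroy the balanced splitting of the middle term. This is essentially where the computations of \cite{ds} do their real work.
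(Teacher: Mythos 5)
A preliminary remark: the paper itself offers no proof of this proposition --- it is imported wholesale from de Jong--Starr (\cite{ds}, Section 7) --- so there is no internal argument to measure yours against, and I can only judge the proposal on its own merits. Your reduction is the right one: the displayed splitting types are exactly the most balanced rank-$4$, degree-$(6e-2)$ types (you have implicitly corrected the paper's typo in the even case, where $\frac{3e-1}{2}$ should read $\frac{3e-2}{2}$), and this is an open condition in families, so it suffices to exhibit one good pair $(X_0,C_0)$ in the same irreducible family as the general pair. Two repairs are needed already at this stage. For even $e$ the single vanishing $h^1(N_{C/\P^5}(-q-1))=0$ with $q=\frac{3e}{2}-1$ only forces every summand degree to be $\geq q$, which still permits the unbalanced type $(q+2,q,q,q)$; you need the companion vanishing $h^0(N_{C/\P^5}(-q-2))=0$ as well (both are open, so the reduction survives). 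And passing from one good pair to the general pair requires irreducibility of the incidence variety of pairs, which rests on the irreducibility of $R_e(X)$ for general $X$ (\cite{ij}); this must be invoked, not assumed.

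The genuine gap is the one you flag yourself. For $e\geq 19$ the restriction map $H^0(\O_{\P^5}(3))\to H^0(\O_C(3e))$ is injective for a general rational curve $C$, so no cubic contains it and the entire burden falls on the inductive construction: attach a general line of $X$ at a point of a balanced curve and smooth the node. Whether the smoothing of $C\cup L$ again has maximally balanced normal bundle in $\P^5$ is governed by an elementary-transformation analysis at the node together with a genericity statement about the attaching direction and the gluing of the two normal bundles, and this can genuinely fail for special configurations; asserting that ``the limiting extension class is generic enough'' is precisely the statement to be proved, not an observation. Since you explicitly defer this step to the computations of \cite{ds}, what you have is an accurate road map rather than a proof: the lemma carrying all the mathematical weight is named and then left open, so the proposal as written does not establish the proposition.
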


\bigskip

\begin{proof}[Proof of Theorem \ref{cubic}] 
When $e \geq 5$ is odd, $R_e(X)$ is balanced by Proposition \ref{normal}, so the assertion 
follows from Proposition \ref{remark}.
 
Let now $e \geq 6$ be an even integer, and assume on the contrary that general fibers of the 
MRC fibration of $R_e(X)$ are at least 2-dimensional. Let $S$ and $f$ be as in Proposition 
\ref{stronger}, and let $C$ be a general fiber of $\pi$. Set $N=N_f$ and $Q = N_{f,\pi}$. Then the following properties are satisfied.

\begin{enumerate}
\item[(i)] The composition of the maps 
$$
H^0(S, Q) \to H^0(S,Q|_C) \to H^0(C, N|_C)
$$ 
is surjective.
\item[(ii)] The composition of the maps 
$$
H^0(S, Q \otimes I_{C}) \to 
H^0(C, Q \otimes I_C|_C) \to H^0(C, N \otimes I_{C}|_C)
$$ 
is non-zero.
\end{enumerate}

\noindent We show these lead to a contradiction. 

Let $Q'$ be the normal sheaf of the map $S \to \P^5$ relative to $\pi$. 
We have $Q|_C = N_{C/X}$ and $Q'|_C = N_{C/\P^5}$. 
Since $N_{X/\P^5}=\O_X(3)$, there is a short exact sequence 

\begin{equation}\label{seqseq}
0 \to Q \to Q^\prime \to f^*\O_X(3) \to 0.
\end{equation}
Taking exterior powers, we obtain the following short exact sequence 
\begin{equation}
 0 \to \bigwedge^2Q \otimes f^*\O_X(-3) \to \bigwedge^2 Q^\prime \otimes f^*\O_X(-3) 
\to Q \to 0.
\end{equation}
Since this sequence splits locally, its restriction to $C$ is also a short exact sequence 
\begin{equation}\label{seq3}
 0 \to \bigwedge^2Q \otimes f^*\O_X(-3)|_C \to \bigwedge^2 Q^\prime \otimes f^*\O_X(-3)|_C 
\to Q|_C \to 0.
\end{equation}
Let $V$ be the image of the restriction map 
$$
\alpha: H^1(S, \bigwedge^2Q \otimes f^*\O_X(-3)) \longrightarrow H^1(C, \bigwedge^2Q \otimes 
f^*\O_X(-3)|_C).
$$
We will show that our assumptions imply that $V$ is of codimension at least 2 and that 
the image of the 
boundary map $H^0(C, Q|_C) \to H^1(C, \bigwedge^2Q \otimes f^*\O_X(-3)|_C)$ is a subset of $V$.  
This is not possible since by Proposition \ref{normal},
\begin{equation*}
\begin{split}
H^1(C, \bigwedge^2 Q^\prime \otimes f^*\O_X(-3)|_C) &= H^1(C, \bigwedge^2 N_{C/\P^5} \otimes 
f^*\O_X(-3)|_C) \\ &= H^1(C, \O_C(-2)\oplus \O_C(-1)^{\oplus 2} 
\oplus \O_C)\\
&= {\bf k} .
\end{split}
\end{equation*}

\begin{lemma}\label{trivial}
The kernel of the map $f^*T_X \to Q$ is a line bundle which contains 
$\bigwedge^2 T_S \otimes \pi^*\Omega_{\P^1}$ as a subsheaf. 
\end{lemma}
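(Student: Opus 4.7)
The plan is to unravel the pushout diagram defining $Q=N_{f,\pi}$ in order to identify the kernel of $f^*T_X \to Q$ explicitly. By construction, $Q$ is the pushout of the inclusion $T_S \hookrightarrow f^*T_X$ along the surjection $d\pi : T_S \twoheadrightarrow M$, where $M \subseteq \pi^*T_{\P^1}$ is the image of $d\pi$. A direct diagram chase on the pushout identifies the kernel of $f^*T_X \to Q$ with $\ker(T_S \to M)$; and since $M$ embeds in $\pi^*T_{\P^1}$, this coincides with the sheaf-theoretic kernel $K := \ker(d\pi)$.

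To see that $K$ is a line bundle on the smooth surface $S$, note that $M$, being a torsion-free rank-one subsheaf of the line bundle $\pi^*T_{\P^1}$, has depth at least $1$ at every point. Applying the depth lemma to $0 \to K \to T_S \to M \to 0$, with $T_S$ locally free (depth $2$), gives $\mathrm{depth}(K) \geq \min(\mathrm{depth}(T_S),\mathrm{depth}(M)+1) \geq 2$ at every point of $S$. By the Auslander-Buchsbaum formula, $K$ is locally free; since $K$ has generic rank one, it is a line bundle.

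It remains to embed $\bigwedge^2 T_S \otimes \pi^*\Omega_{\P^1}$ into $K$. The natural contraction
$$
(v_1 \wedge v_2) \otimes \omega \;\longmapsto\; \langle \omega, d\pi(v_1)\rangle\, v_2 \;-\; \langle \omega, d\pi(v_2)\rangle\, v_1,
$$
using the evaluation pairing $\pi^*\Omega_{\P^1}\otimes\pi^*T_{\P^1}\to\O_S$, defines a morphism $\bigwedge^2 T_S \otimes \pi^*\Omega_{\P^1} \to T_S$ whose image lies in $K$: because $\pi^*T_{\P^1}$ has rank one, any two local sections $a,b$ of $\pi^*T_{\P^1}$ satisfy $\langle\omega,a\rangle\,b=\langle\omega,b\rangle\,a$, so applying $d\pi$ annihilates the expression above. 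Over the open locus where $\pi$ is smooth this recovers the familiar determinant identification $\det T_S \otimes \pi^*\Omega_{\P^1}\cong T_{S/\P^1}$, so the map is generically an isomorphism and hence injective, the source being a line bundle. The only real subtlety is the second step, where one must ensure that the kernel is locally free globally (not just at the smooth locus of $\pi$); the depth/Auslander-Buchsbaum computation provides exactly this.
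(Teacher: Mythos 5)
Your proof is correct and follows essentially the same route as the paper's: identify the kernel with $\ker(T_S\to\pi^*T_{\P^1})$ via the pushout, deduce local freeness of this rank-one kernel (the paper says ``reflexive, hence locally free on a smooth surface,'' which is the same fact your depth/Auslander--Buchsbaum computation proves), and embed $\bigwedge^2T_S\otimes\pi^*\Omega_{\P^1}$ by the contraction map, whose composition with $d\pi$ vanishes because $\pi^*T_{\P^1}$ has rank one. The only difference is that you spell out the pushout diagram chase and the local-freeness argument in more detail than the paper does.
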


\begin{proof}
The kernel of $f^*T_X \to Q$ is equal to the kernel of the map induced by $\pi$ on the tangent bundles 
$T_S \to \pi^*T_{\P^1}$ which we denote by $F$ 
$$ 0 \to F \to T_S \to \pi^*T_{\P^1}.$$
Since $F$ is reflexive, it is locally free on $S$, and it is clearly of rank 1. Also the composition of the maps 
$$ \bigwedge^2T_S \otimes \pi^*\Omega_{\P1} \to \bigwedge^2T_S \otimes \Omega_S = T_S \to \pi^* 
T_{\P^1}$$
is the zero-map. 
So $\bigwedge^2 T_S \otimes \pi^*\Omega_{\P^1}$ is a subsheaf of $F$. 
\end{proof}

\bigskip

Given a section $r \in H^0(C, Q \times I_C|_C) $, we can define a map 
$$
\beta_r: H^1(C,\bigwedge^2Q \otimes f^*\O_X(-3)|_C)  \longrightarrow H^1(C, \omega_S|_C)= \k
$$
as follows. The lemma above implies there is  a generically injective map 
$$
\Psi: \bigwedge^3Q \otimes f^*\O_X(-3) \otimes I_C \to \omega_S \otimes \pi^*T_{\P^1} \otimes I_C.
$$
Restricting to $C$, we get a map 
$$
\Psi|_C : (\bigwedge^3Q \otimes f^*\O_X(-3) \otimes I_C)|_C \to 
\omega_S|_C.
$$
Also, $r$ gives a map
$$
\Phi_r: \bigwedge^2Q \otimes f^*\O_X(-3)|_C \stackrel{\wedge r}{\longrightarrow} \bigwedge^3Q \otimes f^*\O_X(-3) \otimes I_C|_C,
$$
and we define $\beta_r$ to be the map induced by the composition $\Psi|_C\circ \Phi_r$. Note 
that $\beta_r$ is non-zero if $r \neq 0$.

\begin{lemma}\label{dif}
For  sections $r, r^\prime \in H^0(C, Q\otimes I_C|_C)$, $\ker(\beta_r) = \ker(\beta_{r^\prime})$ 
if and only if $r$ and $r^\prime$ are scalar multiples of each other.
\end{lemma}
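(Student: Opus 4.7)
The plan is to reduce the lemma to showing that the linear map $r \mapsto \beta_r$ from $H^0(C, Q \otimes I_C|_C)$ to $H^1(C, \bigwedge^2 Q \otimes f^*\O_X(-3)|_C)^\vee$ is injective. Once injectivity is in hand, the lemma follows at once: two linear functionals to $\k$ have the same kernel if and only if they are scalar multiples of each other, so injectivity forces $r$ and $r'$ to be proportional whenever $\ker(\beta_r) = \ker(\beta_{r'})$.

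The key geometric input is that $C$ is a general fiber of $\pi$, so $\pi$ is smooth in a neighborhood of $C$ and the inclusion $\bigwedge^2 T_S \otimes \pi^*\Omega_{\P^1} \hookrightarrow F$ from Lemma \ref{trivial} is an isomorphism there. Hence $\Psi|_C$ is an isomorphism of line bundles. Combined with the adjunction $\omega_S|_C \cong \omega_C \otimes I_C|_C$ (using $N_{C/S}^\vee \cong I_C|_C$) and the canonical pairing $\pi^*T_{\P^1}|_C \otimes I_C|_C \cong \O_C$, this yields a canonical iso $\bigwedge^3 Q|_C \otimes f^*\O_X(-3)|_C \cong \omega_C$.

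Next, using the canonical rank-$3$ wedge duality $V \cong (\bigwedge^2 V)^\vee \otimes \det V$ applied to $V = Q|_C$, together with the iso from the previous step, I would identify
\[
\Hom(\bigwedge^2 Q \otimes f^*\O_X(-3)|_C, \omega_S|_C) \cong Q|_C \otimes I_C|_C,
\]
and verify that under this identification the composite morphism $\Psi|_C \circ (\wedge r)$ corresponds to the section $r$ itself. Serre duality then identifies $\beta_r$, viewed as an element of $H^1(C, \bigwedge^2 Q \otimes f^*\O_X(-3)|_C)^\vee$, with $r \in H^0(C, Q \otimes I_C|_C)$, up to a nonzero scalar coming from the trivialization of $N_{C/S}$ used to identify $H^1(C, \omega_S|_C) \cong \k$. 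Such a scalar does not affect $\ker \beta_r$, so the desired injectivity follows.

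The main obstacle will be the bookkeeping of line-bundle twists: confirming that the natural identification $\Hom(\bigwedge^2 Q \otimes f^*\O_X(-3)|_C, \omega_S|_C) \cong Q|_C \otimes I_C|_C$ carries the wedge-then-$\Psi|_C$ construction to $r$ itself rather than to a twisted variant, by tracking the adjunction isomorphism, the rank-$3$ wedge duality, and the Serre duality pairing carefully and consistently.
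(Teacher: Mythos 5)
Your proposal is correct and follows essentially the same route as the paper: both arguments apply Serre duality on $C$ together with the rank-$3$ wedge duality $\bigwedge^2 Q^\vee|_C \otimes \det Q|_C \cong Q|_C$ (using $\bigwedge^3 Q|_C = \bigwedge^3 N_{C/X} = f^*\O_X(3)\otimes \Omega_C$) and the adjunction $\omega_S|_C \cong \Omega_C \otimes I_C|_C$ to identify $\beta_r$, up to a fixed nonzero scalar, with the section $r$ itself, whence distinct lines give distinct kernels. The paper packages this as comparing the images of $\beta_r^\vee, \beta_{r'}^\vee \colon H^0(C, I_C^\vee|_C) \to H^0(C, Q|_C)$, which is equivalent to your injectivity-of-$r \mapsto \beta_r$ formulation.
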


\begin{proof}
By Serre duality,  
it is enough to show that the images of the maps 
$$
\xymatrix{
H^0(C, I_C^\vee|_C) = H^0(C, \omega_S^\vee|_C \otimes \Omega_C)  \ar@<0.7ex>[r]^-{\beta_r^\vee} 
\ar@<-0.7ex>[r]_-{\beta_{r'}^\vee} & 
 H^0(C, (\bigwedge^2Q^\vee \otimes f^*\O_X(3))|_C \otimes 
\Omega_C)
}
$$
are the same if and only if $r$ and $r'$ are scalar multiples of each other. 
Since $Q|_C = N_{C/X}$, we have $\bigwedge^3 Q|_C = \bigwedge^3 N_{C/X} = f^*\O_X(3) \otimes \Omega_C$, so 
$$
(\bigwedge^2Q^\vee \otimes f^*\O_X(3))|_C \otimes \Omega_C 
= Q|_C,
$$ 
and the map
$$
\beta_r^\vee: H^0(C, I_C^\vee|_C) \to H^0(C, Q|_C)
$$ 
is simply given by $r$. Similarly, $\beta_{r'}^\vee$ is given by $r'$. The lemma follows.
\end{proof}

Recall that by definition, we have a short exact sequence 
$$ 
0 \to \pi^*T_{\P^1}|_C  \to Q|_C \to N|_C \to 0,
$$
and $ \pi^*T_{\P^1}|_C = I_C^{-1}|_C$. If we tensor this sequence with $I_C|_C$, we get the following short exact sequence
$$
0 \to \O_C \to Q \otimes I_C|_C \to N \otimes I_C|_C \to 0.
$$
Let $i$ be a non-zero section in the image of $H^0(C, \O_C) \to H^0(C, Q \otimes I_C|_C)$. Then 
$i$ induces a map 
$$
\beta_i:  H^1(C,\bigwedge^2Q \otimes f^*\O_X(-3)|_C)  \longrightarrow
H^1(C, \omega_S|_C)= \k
$$ 
as above. Let $$\gamma: H^0(C, Q|_C) \to H^1(C, \bigwedge^2Q \otimes f^*\O_X(-3)|_C)$$ be the 
connecting map in sequence (\ref{seq3}). 

\begin{lemma}\label{lem-i} 
We have
\begin{enumerate}
\item[(a)] $V \subset \ker \beta_i$.
\item[(b)] $ \image (\gamma)\subset \ker \beta_i$.
\end{enumerate}
\end{lemma}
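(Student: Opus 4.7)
My plan for both parts is to promote the local-on-$C$ constructions entering the definition of $\beta_i$ and $\gamma$ to global sheaf-level constructions on $S$, and then read off the conclusions from naturality of restriction together with a cohomological vanishing on $S$. The key geometric observation is that, since $C$ is a fiber of $\pi$ over some $p \in \P^1$, we have $I_C \cong \pi^*\O_{\P^1}(-p)$ and hence
$$\pi^*T_{\P^1} \otimes I_C \cong \pi^*\O_{\P^1}(1).$$
The space $H^0(S, \pi^*\O_{\P^1}(1)) \cong H^0(\P^1, \O_{\P^1}(1))$ is two-dimensional, and restriction to $C$ is evaluation at $p$, so the section $i$ extends to a global section $\tilde i \in H^0(S, \pi^*T_{\P^1} \otimes I_C)$. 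Through the inclusion $M = \pi^*T_{\P^1} \hookrightarrow Q$ valid on the smooth locus of $\pi$ (in particular near $C$), this $\tilde i$ gives a section of $Q \otimes I_C$.

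For part (a), wedging with $\tilde i$ yields a global sheaf map $\Phi_{\tilde i}: \w^2 Q \otimes f^*\O_X(-3) \to \w^3 Q \otimes f^*\O_X(-3) \otimes I_C$, and composing with $\Psi$ produces
$$\Theta: \w^2 Q \otimes f^*\O_X(-3) \to \omega_S \otimes \pi^*T_{\P^1} \otimes I_C$$
on $S$ whose restriction to $C$ is precisely the map $\Psi|_C \circ \Phi_i$ underlying $\beta_i$. By naturality, $\beta_i \circ \alpha$ factors as
$$H^1(S, \w^2 Q \otimes f^*\O_X(-3)) \xrightarrow{\Theta_*} H^1(S, \omega_S \otimes \pi^*T_{\P^1} \otimes I_C) \to H^1(C, \omega_S|_C),$$
so it suffices to show the middle group vanishes. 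Using $\omega_S \otimes \pi^*T_{\P^1} \otimes I_C \cong \omega_S \otimes \pi^*\O_{\P^1}(1)$ and Serre duality on the smooth projective surface $S$, this reduces to $H^1(S, \pi^*\O_{\P^1}(-1)) = 0$. The Leray spectral sequence reduces this further to $\pi_*\O_S = \O_{\P^1}$ and $R^1\pi_*\O_S = 0$, both of which hold because $S$ is a smooth rational surface and $\pi$ has connected rational general fibers.

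For part (b), I would use compatibility of the connecting homomorphisms associated to the exact sequence $0 \to \w^2 Q \otimes f^*\O_X(-3) \to \w^2 Q' \otimes f^*\O_X(-3) \to Q \to 0$ on $S$ and its restriction (\ref{seq3}) to $C$, producing a commutative square
$$
\xymatrix{
H^0(S, Q) \ar[r]^-{\partial_S} \ar[d]_r & H^1(S, \w^2 Q \otimes f^*\O_X(-3)) \ar[d]^{\alpha} \\
H^0(C, Q|_C) \ar[r]^-{\gamma} & H^1(C, \w^2 Q \otimes f^*\O_X(-3)|_C).
}
$$
If the left vertical arrow $r$ is surjective, then $\image(\gamma) \subseteq \image(\alpha) = V$, and part (a) gives (b). Surjectivity of $r$ follows by a diagram chase on the push-out sequence $0 \to M \to Q \to N \to 0$: property (ii') of Proposition \ref{stronger} furnishes surjectivity of $H^0(S, Q) \to H^0(C, N|_C)$, while the extension argument from part (a) furnishes surjectivity of $H^0(S, M) \to H^0(C, M|_C) = H^0(C, N_{C/S}) = \k$.

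I expect the main obstacle to be the technical issue that when $\pi$ has singular fibers, $M$ is only a proper subsheaf of $\pi^*T_{\P^1}$ and the inclusion $\pi^*T_{\P^1} \hookrightarrow Q$ used above may fail on a finite set of points. Since this failure sits in codimension $2$, the sheaf cohomology on $S$ is insensitive to it, and the global constructions can either be carried out on the smooth locus of $\pi$ (followed by the observation that $V$, $\alpha$ and $\beta_i$ depend only on behavior in a neighborhood of $C$), or verified on $S$ up to torsion supported on a codimension-$2$ subset that does not affect the cohomological vanishing driving the argument.
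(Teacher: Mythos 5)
There is a genuine gap, and it occurs at the same point in both parts: you need the section $i$ (and, in part (b), all of $H^0(C,Q|_C)$) to extend to global sections on $S$, and this is precisely what can fail. For (a), your $\tilde i$ is produced through the inclusion $M\hookrightarrow Q$; but $M$ is the \emph{image} of $d\pi\colon T_S\to\pi^*T_{\P^1}$, so $M=I_W\cdot\pi^*T_{\P^1}$ where $W$ is the degeneracy locus of $d\pi$. Hence $H^0(S,M\otimes I_C)$ consists of those sections of $\pi^*T_{\P^1}\otimes I_C\cong\pi^*\O_{\P^1}(1)$ --- all of which are pulled back from $\P^1$ since $\pi_*\O_S=\O_{\P^1}$ --- that in addition vanish on $W$. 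A pulled-back section of $\O_{\P^1}(1)$ vanishes on at most one fiber, so as soon as $\pi$ has singular fibers over two distinct points of $\P^1$ (the typical situation for these pencils), $H^0(S,M\otimes I_C)=0$ and no $\tilde i$ exists. Neither fallback repairs this: working on the smooth locus of $\pi$ destroys the properness needed for the vanishing of $H^1(S,\omega_S\otimes\pi^*\O_{\P^1}(1))$, and $V=\image(\alpha)$ is defined by restriction from all of $S$, not from a neighborhood of $C$; and the failure is an obstruction to the \emph{existence} of a global section, not a torsion discrepancy invisible to cohomology (moreover $W$ can contain divisorial components when $\pi$ has multiple fibers). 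The same issue sinks (b): your reduction of $\image(\gamma)\subset V$ to the surjectivity of $H^0(S,Q)\to H^0(C,Q|_C)$ requires $H^0(S,M)\to H^0(C,M|_C)=\k$ to be surjective, which fails once $W$ meets three or more fibers.

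The paper's proof avoids extending $i$ altogether. On $C$, wedging with $i$ followed by $\Psi|_C$ is identified (up to scalar) with the composite of the \emph{globally defined} projection $\bigwedge^2Q\otimes f^*\O_X(-3)\to\bigwedge^2N\otimes f^*\O_X(-3)$, induced by $Q\to N$, with the determinant map $\bigwedge^2N\otimes f^*\O_X(-3)\to\omega_S$ coming from $0\to T_S\to f^*T_X\to N\to 0$. For (a) this factors $\beta_i\circ\alpha$ through $H^1(S,\omega_S)=0$ with no lift of $i$ required. For (b), the same observation shows $\beta_i\circ\gamma$ factors through $H^0(C,Q|_C)\to H^0(C,N|_C)\to H^1(C,\bigwedge^2N\otimes f^*\O_X(-3)|_C)\to H^1(C,\omega_S|_C)$, and here one only needs surjectivity of $H^0(S,N)\to H^0(C,N|_C)$ --- which property (i) supplies, unlike surjectivity onto $H^0(C,Q|_C)$ --- so the composite is again controlled by the restriction from $H^1(S,\omega_S)=0$. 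If you reroute your argument through the quotient $Q\to N$ in this way, both parts go through; as written, the extension steps do not.
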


\begin{proof}

(a) From the short exact sequence $0 \to T_S 
\to f^*T_X \to N \to 0$, we get a map $$\bigwedge^2 N \otimes f^*\O_X(-3) \to \omega_S,
$$ 
and so, there is a commutative diagram
{\small
$$ 
\xymatrix{
H^1(S,\bigwedge^2Q \otimes f^*\O_X(-3)) \ar[r] \ar[d]^{\alpha} & 
H^1(S,\bigwedge^2N \otimes f^*\O_X(-3)) \ar[r] & H^1(S,\omega_S) =0 \ar[d]\\
H^1(C, \bigwedge^2Q \otimes f^*\O_X(-3)|_C) \ar[rr]^{\beta_i} & & H^1(C, \omega_S|_C).
}
$$

}

(b) Applying the long exact sequence of cohomology to the exact sequence 
$$
0 \to \bigwedge^2N \otimes f^*\O_X(3) \to \bigwedge^2 N' \otimes f^*\O_X(-3) \to N \to 0,
$$ 
we get a map 
$$  H^0(C, N|_C) \to H^1(C, \bigwedge^2N \otimes f^*\O_X(-3)|_C).$$
The map 
$\beta_i \circ \gamma$ factors through 
$$
H^0(C, Q|_C) \to H^0(C, N|_C) \to H^1(C, \bigwedge^2N \otimes f^*\O_X(-3)|_C) \to H^1(C, \omega_S|_C),
$$
and we have a commutative diagram 
$$ 
\xymatrix{
& H^0(S, N) \ar[r] \ar@{>>}[d]& 
H^1(S,\bigwedge^2N \otimes f^*\O_X(-3)) \ar[r] & H^1(S,\omega_S) =0 \ar[d]\\
H^0(C,Q|_C) \ar[r]  &H^0(C, N|_C) \ar[rr] & & H^1(C, \omega_S|_C).
}
$$
Thus we can conclude the assertion from the fact that the restriction map $H^0(S, N) \to H^0(C, N|_C)$ is surjective, and  so the image of the composition of the above maps 
is contained in the image of the restriction map 
$H^1(S, \omega_S ) \to H^1(C, \omega_S|_C)$ which is zero.
\end{proof}

In the following lemma we prove similar results for the sections of $Q \otimes I_C|_C$ which are 
restrictions of global sections of $Q \otimes I_C$.
 
\begin{lemma}\label{lem-r}
If $\tilde{r} \in H^0(S, Q\otimes I_C)$, and if $r = \tilde{r}|_C$, then we have 
\begin{itemize}
\item[(a)] $ V \subset \ker(\beta_r).$
\item[(b)] $ \image(\gamma)  \subset \ker (\beta_r).$
\end{itemize}
\end{lemma}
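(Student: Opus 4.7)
My plan is to mimic the argument of Lemma~\ref{lem-i}, using the fact that $\tilde{r}$ is defined on all of~$S$ to lift the construction of $\beta_r$ from the curve to the surface, and then to invoke a small cohomological vanishing on~$S$.

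For~(a), wedging with~$\tilde{r}$ promotes $\Phi_r$ to a sheaf homomorphism
$$
\Phi_{\tilde{r}}\colon\bigwedge^{2}Q\otimes f^{*}\O_{X}(-3)\longrightarrow \bigwedge^{3}Q\otimes f^{*}\O_{X}(-3)\otimes I_{C}
$$
defined on all of~$S$ whose restriction to~$C$ is~$\Phi_r$. Composing with~$\Psi$ produces a global map
$$
\Psi\circ\Phi_{\tilde r}\colon \bigwedge^{2}Q\otimes f^{*}\O_{X}(-3)\longrightarrow \omega_{S}\otimes\pi^{*}T_{\P^{1}}\otimes I_{C}=\omega_{S}\otimes\pi^{*}\O_{\P^{1}}(1),
$$
which fits into the commutative square
$$
\xymatrix{
H^{1}(S,\bigwedge^{2}Q\otimes f^{*}\O_{X}(-3))\ar[r]\ar[d]^{\alpha} & H^{1}(S,\omega_{S}\otimes\pi^{*}\O_{\P^{1}}(1))\ar[d]\\
H^{1}(C,\bigwedge^{2}Q\otimes f^{*}\O_{X}(-3)|_{C})\ar[r]^-{\beta_r} & H^{1}(C,\omega_{S}|_{C}).
}
$$
Since $S$ is rational, $H^{1}(S,\omega_{S})=0$, and $H^{2}(S,\omega_{S}\otimes\pi^{*}\O_{\P^{1}}(1))=H^{0}(S,\pi^{*}\O_{\P^{1}}(-1))^{\vee}=0$; so from $0\to\omega_{S}\to\omega_{S}\otimes\pi^{*}\O_{\P^{1}}(1)\to\omega_{S}|_{C}\to 0$, the connecting map $H^{1}(C,\omega_{S}|_{C})\to H^{2}(S,\omega_{S})$ is a surjection of one-dimensional $\k$-vector spaces, hence an isomorphism, which forces $H^{1}(S,\omega_{S}\otimes\pi^{*}\O_{\P^{1}}(1))=0$. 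Therefore $V\subset\ker\beta_r$.

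For~(b), I would show that every $s\in H^{0}(C,Q|_{C})$ is the restriction of a section $\tilde s\in H^{0}(S,Q)$. By property~(i) of Proposition~\ref{stronger}, the composition $H^{0}(S,Q)\to H^{0}(C,Q|_{C})\to H^{0}(C,N|_{C})$ is surjective, so the image of~$s$ in $H^{0}(C,N|_{C})$ lifts to some $\tilde s_{1}\in H^{0}(S,Q)$. The difference $s-\tilde s_{1}|_{C}$ lies in $\ker(H^{0}(C,Q|_{C})\to H^{0}(C,N|_{C}))=H^{0}(C,\pi^{*}T_{\P^{1}}|_{C})=H^{0}(C,\O_{C})=\k$ and is plainly the restriction of a global section of $\pi^{*}T_{\P^{1}}=\pi^{*}\O_{\P^{1}}(2)\subset Q$ on~$S$; hence $s$ extends. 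Compatibility of connecting maps with restriction gives $\gamma(s)=\alpha(\gamma_{S}(\tilde s))\in V$, where $\gamma_{S}$ is the boundary map on~$S$ analogous to~$\gamma$, and part~(a) yields $\beta_{r}(\gamma(s))=0$.

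The main point requiring attention is the global construction in~(a): one needs that $\Psi\circ\Phi_{\tilde{r}}$ truly extends to a sheaf morphism on all of~$S$ rather than only along~$C$. This follows because $\Psi$ itself was built globally from the inclusion $\omega_{S}^{-1}\otimes\pi^{*}\Omega_{\P^{1}}\hookrightarrow F$ of Lemma~\ref{trivial}. Beyond this, the argument is a formal diagram chase combined with the short cohomology computation above, and I do not foresee further obstacles.
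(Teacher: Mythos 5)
Your part (a) is correct and is essentially the paper's own argument: wedging with the global section $\tilde r$ and composing with $\Psi$ gives a morphism of sheaves on all of $S$ into $\omega_S\otimes\pi^*T_{\P^1}\otimes I_C=\omega_S\otimes\pi^*\O_{\P^1}(1)$, and the resulting commutative square together with $H^1(S,\omega_S\otimes\pi^*\O_{\P^1}(1))=0$ (which you actually verify, whereas the paper merely asserts it) yields $V\subset\ker\beta_r$.

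Part (b) has a genuine gap. You reduce to extending the one residual section $u$ spanning $\ker\bigl(H^0(C,Q|_C)\to H^0(C,N|_C)\bigr)=H^0(C,\pi^*T_{\P^1}|_C)$ to a global section of $Q$, and you justify this by saying $u$ is ``plainly'' the restriction of a global section of $\pi^*T_{\P^1}\subset Q$. But $\pi^*T_{\P^1}$ is not a subsheaf of $Q=N_{f,\pi}$: by the defining push-out, the subsheaf of $Q$ is $M$, the image of $T_S\to\pi^*T_{\P^1}$, which is $\pi^*T_{\P^1}\otimes I_Z$ with $Z$ the critical subscheme of $\pi$. A section $\pi^*v$ with $v\in H^0(\P^1,T_{\P^1})$ lies in $H^0(S,M)$ only if it vanishes along $Z$, i.e.\ at the images of all singular points of the fibers; since $h^0(T_{\P^1})=3$ and nothing bounds the number of singular fibers, such a $v$ that is moreover nonzero at the point under $C$ need not exist. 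More fundamentally, nothing in Proposition \ref{stronger} puts $u$ in the image of $H^0(S,Q)\to H^0(C,Q|_C)$: property (i) gives surjectivity only onto $H^0(C,N|_C)$, which is exactly one dimension short, and that one dimension is the whole difficulty of the lemma. The paper does not extend $u$; it disposes of it by the identity $\beta_r(\gamma(u))=\psi\circ\lambda(u\wedge r)=\psi\circ\lambda(-r\wedge i)=\beta_i(\gamma(-r))=0$, i.e.\ by swapping the roles of $u$ (identified with $i$ up to scalar) and $r$ inside the wedge and then invoking part (b) of Lemma \ref{lem-i}, whose proof passes to the quotient $N$ and uses $H^1(S,\omega_S)=0$ together with the surjectivity of $H^0(S,N)\to H^0(C,N|_C)$. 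Without this step, or some substitute for it, your argument for (b) does not go through.
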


\begin{proof}
(a) Since $r$ is the restriction of $\tilde{r}$ to $C$, we get a commutative diagram
$$
\xymatrix{
H^1(S,\bigwedge^2Q \otimes f^*\O_X(-3)) \ar[r]^----{\wedge \tilde{r}} \ar[d]^{\alpha} & 
H^1(S, \bigwedge^3Q \otimes f^*\O_X(-3) \otimes I_C) \ar[r]^{\Psi} &
H^1(S, \omega_S \otimes \pi^*T_{\P^1} \otimes I_C)=0 \ar[d] \\
H^1(C,\bigwedge^2Q \otimes f^*\O_X(-3)|_C)  \ar[rr]^---{\beta_r} &&
H^1(C, \omega_S|_C).
}
$$ 

(b) Consider the short exact sequence 
$$ 
0 \to I_C^{-1}|_C \to Q|_C \to N|_C \to 0.
$$
Since by property (i), $H^0(S, Q) \to H^0(C, N|_C)$ is surjective, to prove the statement, it is enough to show 
that for any non-zero $u$ in the image of $H^0(C, I_C^{-1}|_C) \to H^0(C, Q|_C)$, we have $\gamma(u) 
\in \ker \beta_r$.

Consider the diagram
$$
\xymatrix
{
H^0(C, Q|_C) \ar[r]^--{\gamma} \ar@/_/[d]_{\wedge i}  \ar@/^/[d]^{\wedge r} & H^1(C, \bigwedge^2Q \otimes f^*\O_X(-3)|_C) \ar@/_/[d]_{\wedge i}  \ar@/^/[d]^{\wedge r} \ar@/_/[dr]_-------------------{\beta_i} \ar@/^/[dr]^{\beta_r} \\
H^0(C, \bigwedge^2 Q \otimes I_C|_C) \ar[r]_--{\lambda} & 
H^1(C, \bigwedge^3Q \otimes f^*\O_X(-3) \otimes I_C|_C) \ar[r]_--{\psi} & H^1(C, \omega_S|_C)
}
$$
where $\lambda$ is obtained from applying the long exact sequence of cohomology to 
the third wedge power of sequence (\ref{seqseq}), and $\psi $ is induced by the map 
$\Psi |_C$. Then we have 
\begin{equation*}
\begin{split}
 \beta_r \circ \gamma (u)  & = \psi \circ \lambda (u \wedge r) \\
 & = \psi \circ \lambda (-r \wedge i)  \;\; \text{ (up to a scalar factor)} \\
& = \beta_i \circ \gamma (-r)\\
&  = 0,
\end{split}
\end{equation*}
where the last equality comes from the fact that $\gamma(H^0(C, Q|_C)) \subset \ker \beta_i$ 
by part (b) of Lemma \ref{lem-i}. 
\end{proof}

Let now $\tilde{r}_0 \in H^0(S, Q \otimes I_C)$ be so that its image in $H^0(C, N\otimes 
I_C|_C)$ is non-zero. Such $\tilde{r}_0$ exists by property (ii). Then $r_0 := \tilde{r}_0|_C$ defines a map $\beta_{r_0}$. According to Lemma \ref{dif}, $\ker \beta_{r_0} \neq \ker \beta_i$, so 
the codimension of $\ker \beta_i \cap \ker \beta_{r_0}$ is at least 2. 
On the other hand, by the previous lemmas,   
$\image(\gamma) \subset \ker \beta_i \cap \ker \beta_{r_0} $. This is a contradiction since 
$\dim H^1(C, \bigwedge^2 Q^\prime \otimes f^*\O_X(-3)|_C)=0$.

\end{proof}

\bigskip

\section{Low Degree Hypersurfaces}

Let  $X \subset \P^n$ be  a 
general hypersurface of degree $d \leq n/2$. Then by the main theorem of \cite{hrs}, $R_e(X)$  is 
irreducible of dimension $e(n+1-d)+(n-4)$. If $d^2 \leq n$, then by \cite{ds1} and \cite{s4}, $X$ is rationally simply connected. This means that 
for every $e \geq 2$, the evaluation morphism 
$$ev: \overline{\mathcal M}_{0,2}(X,e)  \to X \times X$$
is surjective and a general fiber of $ev$ is irreducible and rationally connected. In particular, 
$R_e(X)$ is rationally connected for $e \geq 2$. If $e=1$, then $R_1(X)$ is the Fano variety 
of lines on $X$ which is  rationally connected if and only if ${{d+1}\choose{2}} \leq n$ 
\cite[V.4.7]{kol}. 

As it was mentioned in the introduction, we expect that when $d^2$ is large compared to $n$, $R_e(X)$ is not uniruled. Here we outline a possible approach based, in part,  on the results of 
the previous sections. 

If $R_{e}(X)$ is uniruled, then 
there are $S$ and $f$ with the two 
properties given in Proposition  \ref{free}. We can assume that the pair $(S,f)$ is minimal in the sense 
that a component of a fiber of $\pi$ which is contracted by 
$f$ cannot be blown down. Let $N$ be the normal sheaf of $f$, 
and let $C$ be a general fiber of $\pi$ with ideal sheaf $I_{C}$ in $S$.  

Denote by $H$ the pullback of a hyperplane in $\P^n$ to $S$, and denote by $K$ a 
canonical divisor on $S$. From the exact sequences 
$0 \to T_S \to f^*T_X \to N \to 0$ and $0 \to f^*T_X \to f^*T_{\P^n} \to f^*\O_{\P^n}(d) \to 0$ 
we get  
\begin{equation*}
\begin{split}
\chi(N \otimes I_{C})  & =  (n+1) \chi (f^*\O_{\P^n}(1) \otimes I_C) - \chi (f^*\O_{\P^n}(d) \otimes I_C) - 
\chi (I_C) - \chi (T_S \otimes I_C) \\
& = (n+1) (\frac{(H-C)\cdot (H-C-K)}{2} +1) - \frac{(dH-C)\cdot(dH-C-K)}{2} -1\\
& \phantom{a} - \frac{-C \cdot (-C-K)}{2}-1
-(2K^2-14)\\
& =  \frac{(n+1-d^{2})}{2} H^{2} - 
\frac{(n+1-d)}{2} H \cdot K  -2K^{2}   - (n+1-d) e + 14.
\end{split}
\end{equation*}

We claim that $2H + 2C + K$ is base-point free and hence 
has a non-negative 
self-intersection number. By the main theorem 
of \cite{reider}, if 
$2H + 2C +K$ is not base point free, then there exists an effective 
divisor $E$ such that either 
$$ 
(2H +2C) \cdot E =1, E^{2} = 0 \;\; \text{ or} \;\; (2H+2C) \cdot E = 0, E^{2} = -1.
$$
The first case is clearly not possible. In the 
second case, $H \cdot E = 0$, and $C \cdot E =0$. 
So $E$ is a component of one of the fibers of $\pi$ which 
is contracted by $f$ and which is a $(-1)$-curve. This contradicts 
the assumption that $(S,f)$ is minimal. Thus $(2H+2C+K)^{2} \geq 0$. 
Also, since $H^1(S, f^*\O_X(-1))=0$, 
$$H \cdot (H+K)  = 2 \chi (f^*\O_X(-1)) - 2 \geq -2,$$ 
so we can write 

\begin{eqnarray*}
\chi(N \otimes I_{C}) & = & \frac{2n+2-d^2-d}{2} H^{2} - (n-d-15)(e-1)-2\\
& & - 2(2H+2C+K)^{2} - \frac{n-d-15}{2}(H \cdot (H+K)+2)\\
& \leq & \frac{2n+2-d^2-d}{2} H^{2} - (n-d-15)(e-1)-2,
\end{eqnarray*}
and therefore $\chi(N \otimes I_{C})$ is negative when $ d^{2}+d 
\geq 2n+2$ and $n \geq 30$.  

The Leray spectral sequence gives a short exact sequence 
$$
0 \to H^{1}(\P^1, \pi_{*} (N \otimes I_C)) \to 
H^1(S, N \otimes I_{C}) \to  
H^{0}(\P^{1}, R^{1}\pi_{*} (N \otimes I_C)) 
\to 0,
$$
and by our assumption on $S$ and $f$, $H^{1}(\P^1, \pi_{*} (N \otimes I_C))=0$. 
If we could choose $S$ such that $H^{0}(\P^{1}, R^{1}\pi_{*} (N \otimes I_C)) =0$, 
then we could conclude that $\chi(N \otimes I_{C}) \geq 0$ and hence $R_e(X)$ could 
not be uniruled for $ d^{2}+d 
\geq 2n+2$ and $n \geq 30$.

We cannot show that for a general $X$, a minimal pair $(S,f)$ as in Proposition \ref{free} 
can be chosen so that  $H^{0}(\P^{1}, R^{1}\pi_{*} (N \otimes I_C)) =0$. 
However, we prove that when $X$ is general, and $(S,f)$ is minimal, 
for every $t \geq 1$, 
$$H^{0}(\P^{1}, R^{1}\pi_{*} (N \otimes I_C \otimes f^*\O_X(t))) =0.$$
We also show that if $t \geq 0$ and $f(C)$ is $t$-normal,  
$$H^{1}(\P^1, \pi_{*} (N \otimes I_C \otimes f^*\O_X(t)) )=0.$$
These imply that $\chi (N \otimes I_C \otimes f^*\O_X(t))$ is non-negative when $X$ is general 
and $f(C)$ is $t$-normal.  To finish the proof of Theorem \ref{lowdeg}, 
we compute $\chi (N \otimes I_C \otimes f^*\O_X(t))$ directly and show that 
it is negative when the inequality in the statement of the theorem holds. 

\begin{proof}[Proof of Theorem \ref{lowdeg}]
Let $X$ be a general hypersurface of degree $d$ in $\P^n$. 
If $R_e(X)$ is uniruled, then there are $S$ and $f$ as in 
Proposition \ref{free}. Assume the pair $(S,f)$ is minimal. 
Let $N$ be the normal sheaf of $f$, and let $C$ be a general fiber of $\pi$.   
Then $H^0(S, N) \to H^0(C, N|_C)$ is surjective. The restriction 
map $H^0(S, f^*\O_X(m)) \to H^0(C, f^*\O_X(m)|_C)$ is also surjective 
since $f(C)$ is $m$-normal, so the restriction map $H^0(S, N \otimes f^*\O_X(m)) 
\to H^0(C, N \otimes f^*\O_X(m)|_C)$ is surjective as well. Therefore, 
$$H^1(\P^1, \pi_*(N \otimes f^*\O_X(m) \otimes I_C)) = 0.$$

Now let $C$ be an arbitrary fiber of $\pi$, and let $C^0$ be an irreducible component of $C$. Then by 
Proposition \ref{global}, $f^*(T_X(t))|_{C^0}$ 
is globally generated for every $t \geq 1$, and hence $N \otimes f^*\O_X(t)|_{C^0}$ is globally generated too. So Proposition \ref{h1} shows that for every $t \geq 1$ 
$$H^0(\P^1, R^1\pi_*(N \otimes f^*\O_X(t)\otimes I_C))=0.$$ 
By the Leray spectral sequence, 

\begin{equation*}
\begin{split}
H^1(S, N \otimes f^*\O_X(m)\otimes I_C) &= H^1(\P^1, \pi_*(N \otimes f^*\O_X(m) \otimes I_C)) \\
& \phantom{a} \oplus H^0(\P^1, R^1\pi_*(N \otimes f^*\O_X(m) \otimes I_C)) \\
& =0,
\end{split}
\end{equation*}

\noindent and therefore, 
$$
\chi(N \otimes f^*\O_X(m) \otimes I_C) \geq 0.
$$

We next 
compute $\chi(N \otimes f^*\O_X(m) \otimes I_C)$.
For an integer $t \geq 0$, set 
$$a_t = \chi (N \otimes I_C \otimes f^*\O_X(t)).
$$ 
We have 
\begin{equation*}
\begin{split}
a_t & = \chi (N \otimes I_C) + \frac{2t(n+1-d) +t^2(n-3)}{2}H^2 - \frac{t(n-5)}{2}H \cdot K -t(n-3)e.
\end{split}
\end{equation*}
So 
$$ a_t = \frac{b_t}{2} \,H^2 + \frac{c_t}{2} \, H \cdot K -2K^2 + d_t,$$
where $$b_t = (n+1-d^2) +2t(n+1-d)+t^2(n-3),$$ $$c_t= -(n+1-d)-t(n-5),$$ 
and 
$$d_t = -t(n-3)e-(n+1-d)e+14.$$
A computation similar to the computation in the beginning of this section shows that 

\begin{equation*}
\begin{split}
a_t & = \frac{b_t-c_t}{2}H^2 -2(2H+2C+K)^2+ \frac{c_t+16}{2} (H \cdot (H+K)+2) + (d_t-c_t -32 +16e)\\
& \leq \frac{b_t-c_t}{2}H^2 + (d_t-c_t-32+16e).
\end{split}
\end{equation*}

\noindent Since 
$$d_t-c_t -32 + 16e = -(e-1)(n-15-d+t(n-3)) -2t -2,$$ 
and since 
$n-15-d+t(n-3) \geq 2n-d-18 \geq 0$ for $t \geq 1$ and $n \geq 12$, we get 

$$ a_t < \frac{b_t-c_t}{2}H^2.$$

\noindent When $d^2 +(2t+1)d \geq (t+1)(t+2)n+2$, $b_t < c_t$, and so 
$ a_t <0$. If we let $t = m$, we get the desired result.

\end{proof}
\begin{proposition}\label{h1}
If $E$ is a locally free sheaf on $S$ such that for every irreducible component $C^0$ of a fiber of $\pi$, 
$E|_{C^0}$ is globally generated, then $R^1\pi_*E = 0$.  
\end{proposition}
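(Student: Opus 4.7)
The plan is to use cohomology and base change to reduce to a fibrewise $H^{1}$ vanishing, and then to establish that vanishing by filtering each scheme-theoretic fibre by its reduced irreducible components.

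Since the fibres of $\pi$ are one-dimensional, $R^{i}\pi_{*}E = 0$ for $i \geq 2$, and cohomology and base change supplies a surjection
\[
R^{1}\pi_{*}E \otimes \k(b) \twoheadrightarrow H^{1}(F_{b},\, E|_{F_{b}})
\]
for every closed point $b \in \P^{1}$, where $F_{b} = \pi^{-1}(b)$ denotes the scheme-theoretic fibre (an effective Cartier divisor on $S$). Consequently, if $H^{1}(F_{b},\, E|_{F_{b}}) = 0$ for every $b$, then $R^{1}\pi_{*}E \otimes \k(b) = 0$ everywhere, and Nakayama's lemma applied to the coherent sheaf $R^{1}\pi_{*}E$ on $\P^{1}$ forces $R^{1}\pi_{*}E = 0$.

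For the fibrewise vanishing, write $F_{b} = \sum n_{i} C_{i}$ with $C_{i}$ reduced and irreducible. Since the general fibre of $\pi$ is $\P^{1}$, each $C_{i}$ is a smooth rational curve and the dual graph of $F_{b}$ is a tree. I would prove by induction on $\sum m_{i}$ that $H^{1}(D,\, E|_{D}) = 0$ for every nonzero effective subdivisor $D = \sum m_{i} C_{i} \leq F_{b}$. The base case $D = C_{i}$ is immediate: $E|_{C_{i}}$ is a globally generated vector bundle on $\P^{1}$, hence a direct sum of line bundles of non-negative degree, so it has vanishing $H^{1}$. For the inductive step, pick a reduced component $C_{j}$ with $m_{j} \geq 1$, set $D' = D - C_{j}$, and use the short exact sequence
\[
0 \to E|_{C_{j}} \otimes \O_{S}(-D')|_{C_{j}} \to E|_{D} \to E|_{D'} \to 0.
\]
Combined with the inductive hypothesis $H^{1}(D',\, E|_{D'}) = 0$, the question reduces to showing $H^{1}(C_{j},\, E|_{C_{j}} \otimes \O_{S}(-D')|_{C_{j}}) = 0$ on $C_{j} \cong \P^{1}$.

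The main obstacle is to choose the order in which the components are added so that this last vanishing holds at every step. Because every summand of the globally generated bundle $E|_{C_{j}}$ on $\P^{1}$ has degree $\geq 0$, it suffices to arrange $D' \cdot C_{j} \leq 1$ throughout the filtration. I expect this to be achievable by a greedy combinatorial ordering: one first exhausts the reduced skeleton by a tree traversal of the dual graph of $F_{b}$, so that each newly added leaf meets the current union transversally in a single point, and then interleaves the remaining repeated components, exploiting the relations $F_{b} \cdot C_{i} = 0$ and the tree structure to keep each successive intersection number at most $1$. Establishing this ordering in full generality is the technical core of the argument.
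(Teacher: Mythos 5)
Your overall strategy is exactly the paper's: reduce via cohomology and base change to the vanishing of $H^1$ on each scheme-theoretic fiber, then filter the fiber by its irreducible components (with multiplicity) using decomposition sequences, so that at each step one only needs $H^1$ of a globally generated bundle on $\P^1$ twisted down by the intersection number with the part already accounted for, which vanishes provided that intersection number is at most $1$. The condition you isolate, $D'\cdot C_j\leq 1$ at every step, is precisely the condition $(C_1+\dots+C_i)\cdot C_{i+1}\leq 1$ in the paper's filtration.

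However, there is a genuine gap: the existence of such an ordering is the entire content of the combinatorial step, and you explicitly leave it as an expectation ("I expect this to be achievable by a greedy combinatorial ordering\dots Establishing this ordering in full generality is the technical core of the argument"). This is not a routine verification once multiplicities exceed $1$: for instance, in the fiber $2A+B+C$ with $A\cdot B=A\cdot C=1$, $B\cdot C=0$, $A^2=-1$, $B^2=C^2=-2$, the seemingly natural order $A,B,A,C$ fails because $(2A+B)\cdot C=2$, so the order of insertion of the repeated copies genuinely matters, and your proposed recipe ("exhaust the reduced skeleton by a tree traversal, then interleave the remaining repeated components") is not accompanied by any argument that a valid interleaving always exists. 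The paper supplies this as a separate statement (Lemma \ref{intersection}) and proves it by induction on the number of components counted with multiplicity: a reducible fiber contains a $(-1)$-curve $C^0$; contracting it yields a fiber with fewer components on a new ruled surface, one takes the ordering there by induction, and then reinserts a copy of $C^0$ immediately after each proper transform that meets it (there are exactly $r=\operatorname{mult}_{C^0}(C)$ such insertions since $C\cdot C^0=0$). Without this lemma, or an equivalent argument, your proof is incomplete at its central point.
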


\begin{proof}
By cohomology and base change \cite[Theorem III.12.11]{hart}, it suffices to prove that for 
every fiber $C$ of $\pi$, $H^1(C, E|_C)=0$. By Lemma \ref{intersection}, we can write 
$C= C_1 +\dots +C_l$ such that every $C_i$ is an irreducible component of $C$ and 
such that $(C_1+\dots +C_i) \cdot C_{i+1} \leq 1$ for every $1 \leq i \leq l-1$. Hence 
$$H^1(C_{i+1}, (E \otimes \mathcal I_{C_1 +\dots +C_i}) |_{C_{i+1}})=0 \,\, {\text{ for every }}0 \leq i \leq l-1.$$ 

On the other hand, for every $0 \leq i \leq l-2$, we have a short exact sequence of $\O_S$-modules 
$$ 
0 \to E \otimes \mathcal I_{C_1+ \dots + C_{i+1}}|_{C_{i+2} +\dots + C_l} \to 
E \otimes \mathcal I_{C_1+ \dots + C_i}|_{C_{i+1}+\dots+C_l} \to  E \otimes \mathcal I_{C_1 + \dots +C_i} |_{C_{i+1}} \to 0.
$$   
\noindent So a decreasing induction on $i$ shows that for every $0 \leq i \leq l-2$, 
$$H^1(S, E \otimes \mathcal I_{C_1+ \dots + C_i}|_{C_{i+1}+\dots+C_l}) = 0.$$
\end{proof}

\begin{lemma}\label{intersection}
Let $C$ be a fiber of $\pi$ with $l$ irreducible components counted with multiplicity. 
Then as a 1-cycle, $C$ can be written as $C_1+ \dots +C_l$ such that 
each $C_i$ is an irreducible 
component of $C$ and for every $1 \leq i \leq l-1$,
$$(C_1+\dots +C_i) \cdot C_{i+1} \leq 1.$$
\end{lemma}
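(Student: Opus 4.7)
The plan is to exploit the fact that $\pi\colon S\to\P^1$ has smooth rational general fiber, which forces every fiber to be a tree of smooth rational curves meeting transversally, and then order the components by a rooted tree traversal. Under this structural input, the inequality $(C_1+\cdots+C_i)\cdot C_{i+1}\le 1$ becomes equality in the stronger form $=1$, coming from the single ``parent'' edge in the dual tree.

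First I would establish the structural input. Since $S$ is a smooth projective surface with a morphism $\pi$ to $\P^1$ whose general fiber is a smooth rational curve, relative minimal model theory for surfaces implies that $S$ is obtained from a Hirzebruch surface $\mathbb{F}_a$ by a finite sequence of blowups at (possibly infinitely near) points. All fibers of $\mathbb{F}_a\to\P^1$ are isomorphic to $\P^1$, and one checks inductively that each blowup preserves the following properties of every fiber of $\pi$: it is reduced; every irreducible component is a smooth rational curve; any two distinct components meet in at most one point, transversally; and the dual graph of its components is a tree. In particular, $l$ is just the number of irreducible components of $C$, and $D\cdot D'\in\{0,1\}$ for any two distinct components $D,D'$.

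Given this, the ordering is produced by a rooted tree traversal. Pick any component of $C$ and call it $C_1$. Inductively, once $C_1,\dots,C_i$ have been chosen with $i<l$, choose $C_{i+1}$ to be any component of $C$ not already among them but adjacent in the dual graph to some $C_j$ with $j\le i$; such a component exists because the dual graph is connected and the chosen set is a proper nonempty subset. Since the dual graph is a tree, adjoining $C_{i+1}$ to the subtree spanned by $\{C_1,\dots,C_i\}$ cannot create a cycle, so $C_{i+1}$ is adjacent to exactly one of the earlier $C_j$. Hence $(C_1+\dots+C_i)\cdot C_{i+1}=1$, which is the desired inequality.

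The only nontrivial step is the structural claim that fibers of $\pi$ are reduced trees of smooth rational curves meeting transversally; once that is in hand the ordering is elementary graph theory. I expect verifying the tree structure to be the main obstacle: it can be handled either by invoking the relative minimal model program for ruled surfaces or by a direct induction that tracks what a blowup at a point (smooth point of a component, or node of two components) does to the dual graph of a fiber, showing in each case that the graph remains a tree of smooth rational curves meeting transversally.
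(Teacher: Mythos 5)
Your argument rests on the structural claim that every fiber of $\pi$ is \emph{reduced}, and that is where it breaks down. The lemma speaks of components ``counted with multiplicity'' precisely because fibers of a $\P^1$-fibration on a blown-up ruled surface need not be reduced: if you blow up a point at which two components of a fiber meet, the new exceptional curve enters the fiber with multiplicity equal to the sum of the multiplicities of the two branches through that point. Concretely, take a fiber $F$ of $\P^1\times\P^1$, blow up a point of $F$ to get the fiber $\tilde F+E_1$, then blow up the node $\tilde F\cap E_1$: the resulting fiber is $\tilde F+\tilde E_1+2E_2$, with $\tilde F^2=\tilde E_1^2=-2$, $E_2^2=-1$, $\tilde F\cdot E_2=\tilde E_1\cdot E_2=1$, $\tilde F\cdot\tilde E_1=0$. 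So your inductive assertion that each blowup preserves reducedness fails exactly in the ``node of two components'' case you mention at the end, and the situation your traversal actually handles --- a reduced normal-crossings tree --- is the one in which the lemma is essentially trivial. The whole content of the statement is the bookkeeping of multiplicities.

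Once multiplicities are present, the greedy tree-traversal does not automatically give the bound, because you must decide where to insert the repeated copies of a multiple component. In the example above, the ordering $C_1=E_2$, $C_2=\tilde F$, $C_3=E_2$, $C_4=\tilde E_1$ is a legitimate ``visit adjacent components, repeating according to multiplicity'' ordering, yet $(C_1+C_2+C_3)\cdot C_4=(2E_2+\tilde F)\cdot\tilde E_1=2$. A good ordering does exist here ($\tilde F,\,E_2,\,\tilde E_1,\,E_2$ gives $1,1,1$), but producing one in general needs more than connectivity of the dual graph. The paper's proof handles this by induction on $l$ via blow-downs: a reducible fiber contains a $(-1)$-curve $C^0$ (by Zariski's lemma and $K\cdot F=-2$); contract it, order the components of the image fiber by induction, pass to proper transforms, and insert a copy of $C^0$ immediately after each $C_i$ whose proper transform meets $C^0$. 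If you want to keep a direct combinatorial argument, you would have to build that interleaving rule into your traversal, which is in effect what the blow-down induction does for you.
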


\begin{proof}
The proof is by induction on $l$. If $l=1$, there is nothing to prove. Assume the assertion holds for $k \leq l-1$. There is 
at least one component $C^0$ of $C$ such that $C^0\cdot C^0 = -1$. Let $r$ be the multiplicity of $C^0$ in $C$. 
Blowing down $C^0$, we get a rational surface $S^\prime$ over $\P^1$. Denote by $C'$ 
the blow-down of $C$. Then by the induction hypothesis, we can write $$C^\prime =  C'_1 
+ \dots + C^\prime_{l-r}$$ such that $(C'_1 + \dots + C^\prime_i) \cdot C^\prime_{i+1} \leq 1$ for every $1 \leq i \leq l-r-1$. 
Let $C_i$ be the proper transform of $C'_i$. Then if in the above sum we replace $C^\prime_i$ by  $C_i$ when $C_i$ does not intersect 
$C^0$ and by $C_i + C^0$ when $C_i$ intersects $C^0$, we get the desired result for $C$.
\end{proof}

\begin{proposition}\label{global}
Let $X \subset \P^n$ be a general hypersurface of degree $d$. For any morphism 
$h: \P^1 \to X$, $h^*(T_X  (1))$ is globally generated. 
\end{proposition}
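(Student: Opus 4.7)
The plan is to reduce the problem via the normal bundle sequence to a surjectivity statement, then establish the surjectivity for general $X$ by an incidence-variety argument. If $h$ is constant there is nothing to prove, so I would assume $h$ has degree $e \geq 1$.

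First, a vector bundle on $\P^1$ is globally generated iff every Birkhoff--Grothendieck summand has non-negative degree, iff the twist by $\O_{\P^1}(-1)$ has vanishing $H^1$. Pulling back $0 \to T_X(1) \to T_{\P^n}(1)|_X \to \O_X(d+1) \to 0$ along $h$ and twisting by $\O_{\P^1}(-1)$, one observes that the Euler sequence exhibits $T_{\P^n}(1)$ as a quotient of $\O_{\P^n}(2)^{\oplus (n+1)}$, hence globally generated; thus $h^*T_{\P^n}(1)(-1)$ has summands of degree $\geq -1$ and vanishing $H^1$. The problem reduces to showing that
$$\alpha_{h, F} \colon H^0(\P^1, h^*T_{\P^n}(1)(-1)) \longrightarrow H^0(\P^1, \O_{\P^1}((d+1)e - 1))$$
is surjective, where $F$ defines $X$.

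Second, I would identify $\alpha_{h, F}$ explicitly. Via the twisted Euler sequence $0 \to \O_{\P^1}(e-1) \to \O_{\P^1}(2e-1)^{\oplus(n+1)} \to h^*T_{\P^n}(1)(-1) \to 0$, and writing $h = [f_0 : \cdots : f_n]$, the map sends $(g_0, \ldots, g_n) \mapsto \sum_i g_i \cdot h^*F_{x_i}$, where each $g_i \in \k[t_0, t_1]_{2e-1}$ is taken modulo tuples $(g f_0, \ldots, g f_n)$ with $g \in \k[t_0, t_1]_{e-1}$. Smoothness of $X$ along $h(\P^1)$ ensures the polynomials $h^*F_{x_0}, \ldots, h^*F_{x_n}$ have no common zero on $\P^1$, so the ideal they generate in $\k[t_0, t_1]$ is primary to the irrelevant ideal. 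Surjectivity of $\alpha_{h, F}$ is then the statement that this ideal contains all homogeneous polynomials of degree $(d+1)e - 1$.

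The third step is to establish this for general $X$. Fixing $h$, the map $F \mapsto (h^*F_{x_i})_{i=0}^n$ on the linear space $\{F : h^*F = 0\}$ is linear, so exhibiting a single $F$ (for instance by specializing to a Fermat-type polynomial and restricting) for which the associated ideal contains all polynomials of degree $(d+1)e - 1$ would force the same property for $F$ in a Zariski-dense open subset. An incidence-variety argument on $\{(h, F) : h^*F = 0\} \subset \mathop{\rm Mor}_e(\P^1, \P^n) \times |\O_{\P^n}(d)|$, stratifying by the cokernel rank of $\alpha_{h,F}$ and comparing dimensions, then shows that the locus of "bad" $(h, F)$ does not dominate $|\O_{\P^n}(d)|$ for each fixed $e$; taking a countable intersection over $e \geq 1$ gives the statement for very general $X$.

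The main obstacle will be producing the specialization $F$ (or an equivalent direct codimension estimate) in a way that is uniform in $h$, and ensuring the regularity bound on the ideal $(h^*F_{x_i})$ in $\k[t_0, t_1]$ suits the target degree $(d+1)e - 1$ as $e$ varies. An alternative strategy, bypassing the incidence argument, is to exhibit a specific smooth hypersurface where global generation holds for every morphism, and then conclude by openness of the condition.
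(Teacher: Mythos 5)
Your reduction in the first two steps is sound and is essentially the same skeleton as the paper's argument: global generation of $h^*(T_X(1))$ is equivalent to $H^1(\P^1, h^*(T_X(1))\otimes \O_{\P^1}(-1))=0$, which (since $T_{\P^n}(1)$ is globally generated) is equivalent to the surjectivity of your map $\alpha_{h,F}$, i.e.\ to the statement that the ideal $(h^*F_{x_0},\dots,h^*F_{x_n})\subset \k[t_0,t_1]$ contains every form of degree $(d+1)e-1$. The problem is that your third step --- the only place where generality of $X$ enters --- is not a proof but a list of strategies, each of which has a real obstruction. The statement genuinely fails for special smooth hypersurfaces: on the Fermat hypersurface $F=\sum x_i^d$ the lines $[s:\zeta s:t:\zeta' t:0:\cdots:0]$ give $(h^*F_{x_i})=(s^{d-1},t^{d-1})$, whose degree-$d$ piece has dimension $4<d+1$ for $d\geq 4$, so $h^*(T_X(1))$ is \emph{not} globally generated there. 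This kills the Fermat specialization outright, and it also shows that your ``alternative strategy'' (one good hypersurface plus openness) cannot be run naively: the condition quantifies over all morphisms of all degrees, the spaces $\Hom(\P^1,X)$ of fixed degree are not proper, and the needed openness is exactly what has to be argued. Finally, your incidence-variety route, even if the dimension counts were supplied, concludes only for \emph{very general} $X$ (a countable intersection over $e$), which is weaker than the stated ``general.''

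The paper closes precisely this gap by importing a nontrivial genericity theorem of Voisin (\cite{voisin}, Proposition 1.1): for general $X$, the image of the restriction $H^0(X,\O_X(d))\to H^0(\P^1,h^*\O_X(d))$ is contained in the image of $H^0(\P^1,h^*T_{\P^n})\to H^0(\P^1,h^*\O_X(d))$, i.e.\ every pulled-back degree-$d$ form lies in $\sum_i \k[t_0,t_1]_e\cdot h^*F_{x_i}$. Granting this, the paper gives a short pointwise argument: lift a vector $r\in h^*(T_X(1))|_p$ to a global section $S$ of $T_{\P^n}(1)$, write its image $T\in H^0(\O_{\P^n}(d+1))$ as $\sum x_iG_i$ using $T(h(p))=0$, and correct $h^*S$ by $\sum h^*(x_i)\tilde s_i$ with $\phi(\tilde s_i)=h^*G_i$ supplied by Voisin's statement. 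So to complete your proposal you would either have to prove Voisin's containment (which is where all the work is --- it is a statement about the universal hypersurface, not a specialization argument) or cite it; once you have it, your surjectivity of $\alpha_{h,F}$ follows, but it does not follow from what you have written.
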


\begin{proof}
The proposition follows from \cite[Proposition 1.1]{voisin}. We give a proof here for the sake of 
completeness. 

Consider the short exact sequence 
$$ 0 \to h^*T_X \to h^*T_{\P^n} \to h^*\O_X(d) \to 0.$$
\noindent Since $X$ is general, the image of the pull-back map 
$H^0(X,\O_X(d)) \to H^0(\P^1, h^*\O_X(d))$ is contained in the image of the map 
$H^0(\P^1, h^*T_{\P^n}) \to H^0(\P^1, h^*\O_X(d)).$ Choose a homogeneous coordinate system for $\P^n$. 
Let $p$ be a point in $\P^1$, and without loss of 
generality assume that $h(p) = (1:0:\dots:0)$. We show that for any $r \in h^*(T_X(1))|_p$,  
there is $\tilde{r} \in H^0(\P^1, h^*(T_X(1)))$ such that $\tilde{r}|_p = r$. 

Consider the exact sequence 
$$
0 \longrightarrow H^0(\P^1, h^* T_X(1)) \longrightarrow H^0(\P^1, h^*T_{\P^n}(1)) 
\stackrel{\phi}{\longrightarrow}  H^0(\P^1, h^*\O_X(d+1)).
$$
Denote by $s$ the image of $r$ in $h^*(T_{\P^n}(1))|_p$. 
There exists $S \in H^0(\P^n, T_{\P^n}(1))$ such that the restriction of 
$\tilde{s}:=h^*(S)$ to $p$ is 
$s$. Denote by $T$ the image of $S$ in $H^0(\P^n, 
\O_{\P^n}(d+1))$, and let $\tilde{t} = h^*(T)$. 
Then $T$ is a form of degree $d+1$ on $\P^n$, and since $\tilde{t}|_p = 0$, 
we can write 
$$ T = x_1 G_1 + \dots + x_n G_n,$$
where the $G_i$ are forms of degree $d$. Our assumption implies that for every $1 \leq i \leq n$, 
there is $\tilde{s}_i \in H^0(\P^1, h^*T_{\P^n})$ such that 
$\phi(\tilde{s}_i) = h^*G_i$. Then
$$\phi(\tilde{s} - h^*(x_1) \tilde{s}_1 -\dots -h^*(x_n)\tilde{s}_n)= 
\tilde{t}-h^*(x_1G_1)-\dots -h^*(x_n G_n)= 0,
$$
 and therefore, 
$\tilde{s} - h^*(x_1) \tilde{s}_1 -\dots -h^*(x_n)\tilde{s}_n$ is the image of some $\tilde{r} 
\in H^0(\P^1, h^*(T_X(1)))$. Since $(\tilde{s} - h^*(x_1) \tilde{s}_1 -\dots -h^*(x_n)\tilde{s}_n)|_p = 
\tilde{s}|_p =s$, we have $\tilde{r}|_p= r$.

\end{proof}

We remark that although for every $e$ and $n$ with $e \geq n+1 \geq 4$, there are smooth non-degenerate rational curves of degree $e$ in $\P^n$ which are not 
$(e-3)$-normal \cite[Theorem 3.1]{glp}, a general smooth rational curve of degree $e$ in a general hypersurface of degree $d$ has possibly a smaller normality. 

For example, let $C$ be a smooth rational curve of degree $e$ in $\P^n$, and write 
$$N_{C/\P^n} =\O_C(a_1) \oplus \dots \oplus \O_C(a_{n-1}).$$
Then it follows from \cite[Proposition 1.2]{glp} that if $e \geq 3$, and if 
$$m = \max{\{a_i+a_j, 1 \leq i < j \leq n-1\}} + 1 - 2e,$$
the curve $C$ is $m$-normal. We expect  that $N_{C/\P^n}$ is as balanced as possible, i.e. 
$|a_i -a_j| \leq 1$ for every $1 \leq i,j \leq n-1$, if $C$ is a general smooth rational curve 
in a general hypersurface of degree $\leq n/2$ and if the degree of $C$ is large enough compared to $n$.

\medskip

\newcommand{\closer}{\vspace{-1.5ex}}

\bigskip
\noindent{Department of Mathematics, Washington University, St. Louis,
MO 63130}\par
\noindent{beheshti@math.wustl.edu}\par

\end{document}